\definecolor{asparagus}{rgb}{0.53, 0.66, 0.42}
\newcommand{\ZZ}{\mathbb Z}
\newcommand{\FF}{\mathbb F}
\newcommand{\bo}{b{\rm o}}
\newcommand{\tmf}{\operatorname{tmf}}
\newcommand{\sm}{\wedge}
\newcommand{\sq}[1]{Sq^{#1}}
\newcommand{\Ext}{\operatorname{Ext}}
\newcommand{\Hom}{\operatorname{Hom}}
\newcommand{\A}{\mathscr A}
\newcommand{\st}{\>:\>}
\newcommand{\alphas}{\alpha_1,
\alpha_2,\ldots}
\newcommand{\zeroalphas}
{0,\alpha_2,\alpha_3,\ldots}
\newcommand{\epsilons}
{\ve_2,\ldots,\ve_{m+2}}
\newcommand{\zeros}{0,0,\ldots,0}
\newcommand{\floor}[1]{\lfloor #1\rfloor}
\newcommand{\ve}{\varepsilon}
\newcommand{\abs}[1]{\left|#1\right|}
\newtheorem{theorem}{Theorem}[section]
\newtheorem{lemma}[theorem]{Lemma}
\newtheorem{corollary}[theorem]{Corollary}
\theoremstyle{definition}
\newtheorem{example}{Example}[section]
\newtheorem{remark}[example]{Remark}
\author{Scott M. Bailey 
\and Donald
M. Larson
}
\title{Enumerating partitions arising in homotopy theory}
\date{\today}
\begin{document}

\begin{abstract}
We present an infinite family of recursive formulas that count binary integer partitions satisfying natural divisibility conditions and show that these counts are interrelated via partial sums.  Moreover, we interpret the partitions we study in the language of graded polynomial rings and apply this to the mod $2$ Steenrod algebra to compute the free rank of certain homology modules in stable homotopy theory.


\vspace{10pt}

\noindent{\bf Keywords.} 
Steenrod algebra,
Brown-Gitler modules, partitions.

\noindent{\bf Mathematics
Subject Classification 2020.}
\href{
https://mathscinet.ams.org/mathscinet/msc/msc2020.html?t=55Sxx&btn=Current}{55S10}, 
\href{
https://mathscinet.ams.org/mathscinet/msc/msc2020.html?t=16Wxx&btn=Current}{16W50}, 
\href{
https://mathscinet.ams.org/mathscinet/msc/msc2020.html?t=55Txx&btn=Current}{55T15}, 
\href{
https://mathscinet.ams.org/mathscinet/msc/msc2020.html?t=05Axx&btn=Current}{05A17}
\end{abstract}
\maketitle

\section{Introduction} 
\label{sec:intro}


A binary partition of an integer $n\geq0$ is a partition of $n$ whose parts are powers of 2.  Such partitions have the form $n=\alpha_1\cdot2^0+\alpha_2\cdot2^1+\cdots+\alpha_k\cdot2^{k-1}$ for $0\leq\alpha_i\in\ZZ$, and it is well-known how to count them recursively. Indeed, if $r_{-1}(n)$ denotes the number of binary partitions of $n$, then $r_{-1}(n)=r_{-1}(n-1)$ for $n$ odd and $r_{-1}(n)=r_{-1}(n-1)+r_{-1}(n/2)$ for $n$ even, where $r_{-1}(0)=1$. The sequence $r_{-1}(n)$ has as its first few terms
\[
1,1,2, 2, 4, 4, 6, 6, 10, 10, 14, 14, 20, 20, 26, 26,\ldots
\]
and appears as entry \texttt{A018819} in the On-Line Encyclopedia of Integer Sequences (OEIS) \cite{OEIS}. The subscript $-1$ is a notational convenience, as the sequence $r_{-1}(n)$ will be subsumed into an infinite family of sequences $\{r_m(n):m\geq-1\}$ whose provenance we shall now begin to describe.

The conspicuous two-fold repetition in the sequence $\{r_{-1}(n)\}$ disappears if we instead count binary partitions of $n$ for which $2\mid\alpha_1$ (that is to say, binary partitions with an even number of 1s).  If $r_0(n)$ denotes the number of binary partitions with the aforementioned divisibility condition on $\alpha_1$, then $r_0(n)=r_0(n-2)+r_0((n-2)/2)$ for $n\equiv2\mod4$ and $r_0(n)=r_0(n-2)+r_0(n/2)$ for $n\equiv0\mod4$, where $r_0(0)=1$.
Ignoring the obvious zeros occurring when $n$ is odd yields
the
sequence $r_0(2n)$
which has as its first few terms
\[
1, 2, 4, 6, 10, 14, 20, 26, 36, 
46, 60, 74, 94, 114, 140, 166,\ldots
\]
and appears as entry
\texttt{A000123} of the OEIS, where it is also observed that $r_0(2n)$ is the sum of $r_{-1}(i)$ from $i=0$ to $i=n$.

A desire to perpetuate the partial sum relationship would dictate setting $r_1(n)$ equal to the number of binary partitions of $n$ for which $4\mid\alpha_1$ and $2\mid\alpha_2$ (see entry \texttt{A131205} in the OEIS).  In fact, the entire infinite family of counts that is the subject of this paper emerges in precisely this fashion.  Let $r_m(n)$ denote the number of binary partitions of $n$ for which the $m+1$ divisibility conditions
\begin{equation}\label{eq:divisibility}
2^{m+2-i}\mid\alpha_i,\quad
1\leq i\leq m+1
\end{equation}
hold.  Our main result is a recursive formula for $r_m(n)$.


\begin{theorem}\label{thm:main}
For $0<n\equiv2^{m+1}\mod2^{m+2}$,
we have the recursive formula
\[
r_m(n)=r_m(n-2^{m+1})+\sum_{j=0}^{\floor{m/2}} \binom {m+1}{2j+1} r_m\left(\dfrac{n-(2j+1)2^{m+1}}2
\right).
\]
Similarly, for $0\leq n\equiv0\mod 2^{m+2}$,
\[
r_m(n)=r_m(n-2^{m+1})+\sum_{j=0}^{\floor{(m+1)/2}}\binom {m+1}{2j} r_m\left(\dfrac{n-2j\cdot2^{m+1}}2\right).
\]
For all other $n$, $r_m(n)=0$.
\end{theorem}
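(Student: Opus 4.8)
The plan is to attack this via generating functions, reducing the elaborate divisibility bookkeeping to a single self-similar functional equation. First I would record the generating function $R_m(x)=\sum_{n\ge0}r_m(n)x^n$. Each part $\alpha_i2^{i-1}$ of an admissible partition is a multiple of $2^{m+1}$: for $1\le i\le m+1$ this is forced by \eqref{eq:divisibility}, and for $i\ge m+2$ it holds because $2^{i-1}$ is already divisible by $2^{m+1}$. Hence every $n$ with $r_m(n)\ne0$ is a multiple of $2^{m+1}$, which already proves the final clause of the theorem. Writing $\alpha_i=2^{m+2-i}\beta_i$ for $i\le m+1$ and letting the $\beta_i$ and the remaining $\alpha_i$ range freely gives
\[
R_m(x)=\frac{1}{(1-x^{2^{m+1}})^{m+1}}\prod_{j\ge m+1}\frac{1}{1-x^{2^{j}}}.
\]

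Next I would substitute $y=x^{2^{m+1}}$ and set $S_m(y)=\sum_{N\ge0}r_m(2^{m+1}N)\,y^{N}$, which captures all nonzero values of $r_m$. Since $x^{2^{j}}=y^{2^{\,j-(m+1)}}$ for $j\ge m+1$, the first factor becomes $(1-y)^{-(m+1)}$ and the product above collapses to
\[
S_m(y)=\frac{1}{(1-y)^{m+1}}\prod_{k\ge0}\frac{1}{1-y^{2^{k}}}=\frac{B(y)}{(1-y)^{m+1}},
\]
where $B(y)=\sum_{n\ge0}r_{-1}(n)y^{n}$ is the ordinary binary-partition generating function. The engine of the proof is the classical identity $(1-y)B(y)=B(y^{2})$, equivalent to the recursion for $r_{-1}$ recalled in the introduction. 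Dividing $S_m(y)$ by $S_m(y^{2})$ and applying this identity, the $B$-factors contribute $B(y)/B(y^{2})=1/(1-y)$ while the denominators contribute $(1-y^{2})^{m+1}/(1-y)^{m+1}=(1+y)^{m+1}$, yielding the key functional equation
\[
(1-y)\,S_m(y)=(1+y)^{m+1}\,S_m(y^{2}).
\]

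Finally I would read off the two recursions by comparing coefficients of $y^{N}$. The left-hand side contributes $r_m(2^{m+1}N)-r_m(2^{m+1}(N-1))$, i.e.\ $r_m(n)-r_m(n-2^{m+1})$ for $n=2^{m+1}N$. Expanding $(1+y)^{m+1}=\sum_{l=0}^{m+1}\binom{m+1}{l}y^{l}$, the right-hand side contributes $\sum_{l}\binom{m+1}{l}r_m\!\big(2^{m+1}\tfrac{N-l}{2}\big)$, where only $l\equiv N\pmod2$ survive because $S_m(y^2)$ has support on even exponents. Splitting on parity then does all the work: when $N$ is odd, i.e.\ $n\equiv2^{m+1}\bmod2^{m+2}$, the index must be $l=2j+1$ with $0\le j\le\floor{m/2}$, and $2^{m+1}\tfrac{N-l}{2}=\tfrac{n-(2j+1)2^{m+1}}{2}$, giving the first displayed recursion; when $N$ is even, i.e.\ $n\equiv0\bmod2^{m+2}$, the index must be $l=2j$ with $0\le j\le\floor{(m+1)/2}$, giving the second. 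The base value $r_m(0)=1$ and all negative-argument conventions are handled automatically by the power series.

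I expect the only genuine obstacle to be conceptual rather than computational: recognizing that the tangle of divisibility conditions collapses, after the single substitution $y=x^{2^{m+1}}$, into the transparent product $B(y)/(1-y)^{m+1}$, and that the binary-partition functional equation then upgrades to the self-similar relation $(1-y)S_m(y)=(1+y)^{m+1}S_m(y^2)$. Once that identity is in hand the rest is a parity-indexed coefficient comparison; the only bookkeeping to watch is the exponent arithmetic ensuring every surviving term is a genuine power of $y=x^{2^{m+1}}$, and the matching of the summation ranges $\floor{m/2}$ and $\floor{(m+1)/2}$ to the constraint $0\le l\le m+1$ after the parity split.
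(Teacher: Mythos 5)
Your proposal is correct, but it takes a genuinely different route from the paper. The paper argues bijectively: it splits the set $P_m(n)$ of admissible partitions according to whether $\alpha_1>0$ and according to the residues of $\alpha_2,\ldots,\alpha_{m+2}$ modulo the relevant powers of $2$, builds explicit bijections (the shift $\sigma$ and the subtraction maps $\tau_i$ of Lemmas \ref{shift}--\ref{oddeven}) onto copies of $P_m$ at smaller arguments, counts the pieces with binomial coefficients $\binom{m}{2j}$ and $\binom{m}{2j\pm1}$, and merges the two resulting sums via Pascal's rule. You instead encode the divisibility conditions \eqref{eq:divisibility} once and for all in the product formula for $R_m(x)$, substitute $y=x^{2^{m+1}}$ to collapse it to $S_m(y)=B(y)/(1-y)^{m+1}$ with $B$ the binary-partition series, and upgrade the classical identity $(1-y)B(y)=B(y^2)$ to the functional equation $(1-y)S_m(y)=(1+y)^{m+1}S_m(y^2)$; I have checked this equation and the parity-split coefficient extraction, including the summation ranges $\floor{m/2}$ and $\floor{(m+1)/2}$ coming from $0\leq l\leq m+1$, the identification of odd/even $N$ with the two congruence classes of $n$, and the vanishing clause from the support argument --- all correct, with negative arguments harmlessly absent from the series. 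Your approach is shorter and buys a substantial bonus: since $S_m(y)=S_{m-1}(y)/(1-y)$, the partial-sum relation of Theorem \ref{thm:main_partial} follows immediately from the same setup, whereas the paper devotes all of Section \ref{sec:partial_sum} to an inductive proof of it. What the paper's bijective proof buys in exchange is explicit structure: the maps $\sigma$ and $\tau_i$ realize the recursion at the level of partitions (indeed $\sigma$ is essentially the Verschiebung of Section \ref{sec:application_to_homotopy_theory}), which is what feeds the module-theoretic applications.
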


Furthermore, the sequences $r_m(n)$ are all related to each other via partial sums.

\begin{theorem}\label{thm:main_partial}
For $m\geq0$ and $n\equiv0\mod 2^{m+1}$,
$\displaystyle r_m(n)=\sum_{k=0}^{
n/2^{m+1}}r_{m-1}(2^{m}k)$.
\end{theorem}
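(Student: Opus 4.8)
The plan is to recast the statement as an identity of ordinary generating functions, in which the partial sum becomes the elementary operation of dividing by $1-t$. First I would compute $F_m(x) := \sum_{n\ge0} r_m(n)x^n$. Since a binary partition obeying \eqref{eq:divisibility} amounts to an independent choice of each coefficient $\alpha_i$, the series $F_m$ factors over $i$ into a product of the series $\sum x^{\alpha_i 2^{i-1}}$, with $\alpha_i$ running over the multiples of $2^{m+2-i}$ when $1\le i\le m+1$ and over all nonnegative integers when $i\ge m+2$. The decisive point is that the divisibility conditions are calibrated so that each of the first $m+1$ factors collapses to the \emph{same} series: substituting $\alpha_i = 2^{m+2-i}\beta_i$ turns the $i$th factor into $\sum_{\beta_i\ge0}x^{2^{m+1}\beta_i} = (1-x^{2^{m+1}})^{-1}$. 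Therefore
\[
F_m(x)=\frac{1}{(1-x^{2^{m+1}})^{m+1}}\prod_{\ell\ge m+1}\frac{1}{1-x^{2^{\ell}}}.
\]

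Every exponent appearing in $F_m(x)$ is a multiple of $2^{m+1}$, which incidentally recovers the vanishing $r_m(n)=0$ for $n\not\equiv0\bmod 2^{m+1}$ asserted in Theorem~\ref{thm:main}. Consequently the substitution $t=x^{2^{m+1}}$ loses no information, and it produces the one-variable series
\[
\sum_{N\ge0}r_m(2^{m+1}N)\,t^N=\frac{1}{(1-t)^{m+1}}\prod_{s\ge0}\frac{1}{1-t^{2^{s}}}.
\]
Carrying out the identical computation with $m$ replaced by $m-1$, and then substituting $t=x^{2^{m}}$ (again lossless, since $F_{m-1}$ is supported on multiples of $2^{m}$), yields $\sum_{k\ge0}r_{m-1}(2^{m}k)\,t^{k}=(1-t)^{-m}\prod_{s\ge0}(1-t^{2^{s}})^{-1}$.

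These two one-variable series differ precisely by a single factor of $(1-t)^{-1}$. Since multiplying $\sum_k a_k t^k$ by $(1-t)^{-1}$ yields the partial-sum series $\sum_N\bigl(\sum_{k=0}^{N}a_k\bigr)t^N$, reading off the coefficient of $t^N$ gives $r_m(2^{m+1}N)=\sum_{k=0}^{N}r_{m-1}(2^{m}k)$, and taking $N=n/2^{m+1}$ is exactly the assertion. I expect the sole genuine obstacle to be the first step—setting up $F_m$ and verifying the factor collapse—after which the partial-sum relation is forced by the lone extra $(1-t)^{-1}$. A generating-function-free alternative would induct on $n$ by combining the recursion of Theorem~\ref{thm:main} for $r_m$ with the corresponding recursion for $r_{m-1}$, but managing the two congruence cases $n\equiv 2^{m+1}$ and $n\equiv0\bmod 2^{m+2}$ in tandem makes this route markedly more cumbersome than the generating-function identity.
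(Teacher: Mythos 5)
Your proof is correct, and it takes a genuinely different route from the paper's. The paper proves the identity exactly by the inductive alternative you flagged as cumbersome: it sets $S_m(n)=\sum_{k=0}^{n/2^{m+1}}r_{m-1}(2^mk)$ and verifies by induction, split into the two congruence classes $n\equiv 2^{m+1}$ and $n\equiv 0\bmod 2^{m+2}$, that $S_m$ obeys the recursion of Theorem~\ref{thm:main}; this requires invoking that recursion for both $r_m$ and $r_{m-1}$, Pascal's rule to recombine binomial coefficients, and a separate lemma evaluating $r_{m-1}(2^m)=m+1$ and $r_{m-1}(2^{m+1})=2m+2+\binom{m}{2}$ for the base cases. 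Your argument bypasses Theorem~\ref{thm:main} entirely: the factorization $F_m(x)=(1-x^{2^{m+1}})^{-(m+1)}\prod_{\ell\ge m+1}(1-x^{2^\ell})^{-1}$ follows straight from the definition of $r_m$ via the collapse $\alpha_i=2^{m+2-i}\beta_i$, the substitution $t=x^{2^{m+1}}$ is lossless because the series is supported on multiples of $2^{m+1}$, and the single leftover factor $(1-t)^{-1}$ is precisely partial summation. What your route buys: it is self-contained and shorter, it recovers the vanishing $r_m(n)=0$ for $2^{m+1}\nmid n$ as a byproduct, it handles the edge case $m=0$ (where $r_{-1}$ has no divisibility conditions) uniformly, and iterating the relation exhibits $r_m(2^{m+1}N)$ as the coefficients of $(1-t)^{-(m+1)}\prod_{s\ge 0}(1-t^{2^s})^{-1}$, which packages all the partial-sum relations among the $r_m$ at once. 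What the paper's route buys: it stays within the elementary recursive framework already established (no formal power series), and in doing so serves as a consistency check between its two main theorems.
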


Theorem \ref{thm:main} can be framed in terms of the combinatorics of graded polynomial rings.  Let
\[
H(m)=\FF_2[X_1^{2^{m+1}},X_2^{2^m},X_3^{2^{m-1}},\ldots,X_{m+1}^2,X_{m+2},X_{m+3},\ldots]
\]
where the indeterminate $X_i$ has grading
$|X_i|=2^{i-1}$.
Then a monomial $x\in H(m)$ of grading $n$ of the form $x=X_1^{\alpha_1}X_2^{\alpha_2}\cdots X_k^{\alpha_k}$ is precisely the same data as a partition $n= \alpha_1\cdot2^0+\alpha_2\cdot 2^1+\cdots+\alpha_k\cdot 2^{k-1}$ of $n$ satisfying the divisibility conditions \eqref{eq:divisibility}, yielding the following corollary.

\begin{corollary}\label{cor:polyring}
The number of monomials $x\in H(m)$ with grading $n$ is $r_m(n)$.
\end{corollary}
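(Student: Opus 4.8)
The plan is to establish an explicit bijection between the set of monomials in $H(m)$ of grading $n$ and the set of binary partitions of $n$ satisfying the divisibility conditions \eqref{eq:divisibility}, after which the corollary follows immediately from the definition of $r_m(n)$ as the cardinality of the latter set.

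First I would recall that in any graded polynomial ring over $\FF_2$, the monomials form a basis and are in bijection with the tuples recording the exponent of each generator. For $H(m)$ the generators are $X_i^{2^{m+2-i}}$ for $1\leq i\leq m+1$ and $X_i$ for $i\geq m+2$. Thus every monomial $x\in H(m)$ can be written uniquely, inside the ambient ring $\FF_2[X_1,X_2,\ldots]$, in the form $x=X_1^{\alpha_1}X_2^{\alpha_2}\cdots X_k^{\alpha_k}$, and such an $x$ lies in $H(m)$ precisely when each exponent $\alpha_i$ is a nonnegative integer multiple of the exponent appearing on the corresponding generator. For $1\leq i\leq m+1$ this says exactly $2^{m+2-i}\mid\alpha_i$, while for $i\geq m+2$ no condition is imposed; these are precisely the conditions \eqref{eq:divisibility}.

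Next I would match the grading. Since $|X_i|=2^{i-1}$ and grading is additive on products, the monomial $X_1^{\alpha_1}\cdots X_k^{\alpha_k}$ has grading $\sum_{i=1}^k\alpha_i 2^{i-1}$. Hence a monomial has grading $n$ if and only if $n=\alpha_1\cdot2^0+\alpha_2\cdot2^1+\cdots+\alpha_k\cdot2^{k-1}$, that is, if and only if $(\alpha_1,\ldots,\alpha_k)$ is a binary partition of $n$. Combining this with the previous paragraph, the assignment $x\mapsto(\alpha_1,\ldots,\alpha_k)$ is a bijection between monomials of grading $n$ in $H(m)$ and binary partitions of $n$ satisfying \eqref{eq:divisibility}. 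Since $r_m(n)$ is by definition the number of the latter, the number of the former is $r_m(n)$.

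The argument is essentially bookkeeping, so I do not anticipate a genuine obstacle; the one point demanding care is the indexing, namely confirming that the exponent $2^{m+2-i}$ on the generator $X_i$ translates into the divisibility condition $2^{m+2-i}\mid\alpha_i$ over exactly the range $1\leq i\leq m+1$, and that the grading convention $|X_i|=2^{i-1}$ aligns the weight of $X_i$ with the coefficient of $2^{i-1}$ in the binary partition.
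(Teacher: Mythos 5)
Your proof is correct and matches the paper's own justification: the paper obtains the corollary from exactly this observation, namely that a monomial $X_1^{\alpha_1}X_2^{\alpha_2}\cdots X_k^{\alpha_k}\in H(m)$ of grading $n$ is precisely the same data as a binary partition $n=\alpha_1\cdot2^0+\alpha_2\cdot2^1+\cdots+\alpha_k\cdot2^{k-1}$ satisfying the divisibility conditions \eqref{eq:divisibility}. Your write-up simply makes the bookkeeping (membership in $H(m)$ versus divisibility of exponents, and additivity of the grading) explicit.
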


While of possible independent interest in number theory and theoretical computer science, the quantities $r_m(n)$ and their interpretation in Corollary \ref{cor:polyring} also have a natural application in stable homotopy theory to splittings of the homology of Brown-Gitler spectra.  

\begin{theorem}\label{thm:bgspectra}
Let $0\leq j\in\ZZ$.  As a module over the subalgebra $\A(1)$ of the mod 2 Steenrod algebra $\A$ generated by $Sq^1$ and $Sq^2$, the mod 2 homology of the $j$th integral Brown-Gitler spectrum $B_1(j)$ has free rank $(r_1(4j)-b(j))/8$ where 
\begin{equation}\label{eq:BJ}
b(j) = \begin{cases}
    4j - 2\alpha(j) + 1, &\text{if }\alpha(j)\equiv 0,1\mod{4},\\
    4j - 2\alpha(j) + 5, &\text{if }\alpha(j)\equiv 2,3\mod{4}
\end{cases}
\end{equation}
and where $\alpha(j)$ is number of $1s$ in the binary expansion of $j$.
\end{theorem}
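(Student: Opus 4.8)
The plan is to split the computation into a total-dimension count and a count of the non-free part. Since $\dim_{\FF_2}\A(1)=8$, any finite $\A(1)$-module $M$ decomposes as $M\cong F\oplus R$ with $F$ free and $R$ having no free summand, so the free rank of $M$ equals $(\dim_{\FF_2}M-\dim_{\FF_2}R)/8$. Applied to $M=H_*(B_1(j);\FF_2)$, the theorem is therefore equivalent to the two identities $\dim_{\FF_2}H_*(B_1(j))=r_1(4j)$ and $\dim_{\FF_2}R=b(j)$, which I would prove in turn. (Note that divisibility by $8$ of $r_1(4j)-b(j)$ is then automatic, a useful consistency check.)

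For the first identity I would use the standard description of integral Brown--Gitler homology as a weight-truncation of $H_*(H\ZZ;\FF_2)=\FF_2[\z_1^2,\z_2,\z_3,\ldots]$, where $\z_i$ is the conjugate Milnor generator and the weight is normalized by $w(\z_i)=2^{i-1}$: concretely, $H_*(B_1(j))$ is the span of the monomials of weight at most $2j$. Assigning $\z_i$ the grading $|X_i|=2^{i-1}$ identifies this weight grading with the polynomial ring $H(0)=\FF_2[X_1^2,X_2,X_3,\ldots]$ of Corollary~\ref{cor:polyring}. As every generator has even weight, the monomials of weight at most $2j$ are counted by $\sum_{k=0}^{j}r_0(2k)$, and Theorem~\ref{thm:main_partial} with $m=1$ rewrites this sum as $r_1(4j)$. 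This is precisely the point at which the enumerative results of the earlier sections enter the topology.

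The second identity is the crux. Here I would make the $\A(1)$-action on the weight basis explicit by computing $\sq1$ and $\sq2$, equivalently the Milnor primitives $Q_0=\sq1$ and $Q_1=\sq1\sq2+\sq2\sq1$. Since a finite $\A(1)$-module is free exactly when its $Q_0$- and $Q_1$-Margolis homologies vanish, all Margolis homology is carried by $R$; identifying the non-free indecomposable summands and adding their dimensions computes $\dim_{\FF_2}R$. The quantity $\alpha(j)$ enters through the binary expansion of $j$: the carries governing which weight classes survive are controlled by Lucas/Kummer-type congruences on the binomial coefficients appearing in Theorem~\ref{thm:main}, and the residue $\alpha(j)\bmod4$ dictates the handful of bottom and top summands that deviate from the generic pattern, producing the constant $1$ versus $5$ in \eqref{eq:BJ}.

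The main obstacle will be this last bookkeeping: pinning down the exact list of non-free $\A(1)$-summands and verifying that their dimensions total $4j-2\alpha(j)$ plus the correction term. I would organize it as an induction on $\alpha(j)$, peeling off the top binary digit of $j$, using the base cases $j=0,\ldots,7$ to calibrate \eqref{eq:BJ} directly and then propagating the $\alpha(j)\bmod 4$ dichotomy through the inductive step; the free-rank formula then follows by the reduction of the first paragraph.
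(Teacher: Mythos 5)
Your overall reduction is the same as the paper's: decompose $H_*B_1(j)\cong F\oplus R$ with $F$ free and $R$ containing no free summand (Krull--Schmidt), so that the free rank equals $\left(\dim_{\FF_2}H_*B_1(j)-\dim_{\FF_2}R\right)/8$, and then prove the two identities $\dim_{\FF_2}H_*B_1(j)=r_1(4j)$ and $\dim_{\FF_2}R=b(j)$. Your proof of the first identity is correct and agrees with the paper; indeed your route, combining the weight-truncation description \eqref{eq:homologyBG} with Theorem~\ref{thm:main_partial} at $m=1$ to get $\dim_{\FF_2}H_*B_1(j)=\sum_{k=0}^{j}r_0(2k)=r_1(4j)$, is more explicit than the paper's one-line assertion of this fact.

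The genuine gap is the second identity, which is the real content of the theorem and which your proposal defers to unexecuted ``bookkeeping.'' The paper does not compute the non-free part by hand: it invokes a classification theorem of Davis, Gitler, and Mahowald (\cite[Lemma 3.12]{DGM81}, restated as Lemma~\ref{lem:stable-iso-Milgram}), which asserts an isomorphism $H_*B_1(j)\cong F_{1,j}\oplus Q_{\alpha(j),j-D}$ of $\A(1)$-modules with $F_{1,j}$ free and $Q_{\alpha(j),j-D}$ an explicit Milgram module; the quantity $b(j)$ is then nothing but $\dim_{\FF_2}Q_{\alpha(j),j-D}$, read off from Figure~\ref{fig:Milgram} and the formula for $D$, and the case split on $\alpha(j)\bmod 4$ in \eqref{eq:BJ} comes directly from that lemma. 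Your plan --- compute $Q_0$- and $Q_1$-Margolis homology, identify the non-free indecomposable summands, and induct on $\alpha(j)$ by peeling off binary digits --- amounts to re-proving that lemma, and the mechanism you sketch for doing so would not work as stated: Lucas/Kummer-type congruences on the binomial coefficients in Theorem~\ref{thm:main} control only the dimensions of the weight-graded pieces, and no enumerative data of this kind can determine the $\A(1)$-module structure. One needs the actual $\sq1$- and $\sq2$-action on monomials (dually, the coaction of the dual Steenrod algebra), and extracting from it the $\alpha(j)\bmod 4$ pattern of indecomposable summands is precisely the nontrivial work done in \cite{DGM81}. Until that identification, or some other computation of $\dim_{\FF_2}R$, is actually carried out, \eqref{eq:BJ} is a formula calibrated on small cases rather than a proven identity, and the theorem does not follow.
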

(Compare with Theorem \ref{Thm:Rank1}.)

Section~\ref{sec:application_to_homotopy_theory} is devoted to a discussion of Theorem \ref{thm:bgspectra} and its proof, including all the requisite definitions and a sketch of the relevant background material. Sections \ref{sec:preliminary_results}, \ref{sec:proof_main_theorem}, and \ref{sec:partial_sum} comprise the technical heart of the paper.
Section~\ref{sec:preliminary_results} gives results required for the proof of Theorem~\ref{thm:main} in Section~\ref{sec:proof_main_theorem}.  Section~\ref{sec:partial_sum} contains the proof of the partial sums result given by Theorem~\ref{thm:main_partial}.

\subsection*{Acknowledgements}  The authors would like to thank Paul Goerss, Mark Behrens, and the anonymous referee for useful comments and suggestions.

\section{The homology of Brown-Gitler spectra}\label{sec:application_to_homotopy_theory}
Originally introduced for the purpose of studying obstructions to immersions of manifolds, Brown-Gitler spectra are geometric objects tailored to have (co)homology modules with certain properties.  Among the surprisingly numerous results in stable homotopy theory they have spawned are the wedge sum splittings of spectra related to connective real $K$-theory found by Mahowald \cite{Mah81} and the first author \cite{Bai10}.  In this section, we apply Corollary \ref{cor:polyring} to compute the free rank of the homology (with coefficients in $\FF_2$) of a certain family of Brown-Gitler spectra as modules over $\A(1)$, the submodule of the mod 2 Steenrod algebra $\A$ generated by $Sq^1$ and $Sq^2$.  We avail ourselves of the language of stable homotopy theory to describe our result: spectra and the cohomology theories they represent, completion of spectra at a prime $p$, the Steenrod algebra, and the Adams spectral sequence.  Readers unfamiliar with this material may consult the Chicago lectures of Adams \cite{Ada74}, the guide to computations in homotopy theory at the prime 2 by Beaudry and Campbell \cite{BC18}, or the survey of connective real $K$-theory by Bruner and Greenlees \cite{BG10}.  Readers unconcerned with homotopy theoretic applications may safely skip to Section \ref{sec:preliminary_results}.

Throughout this section, all spectra are implicitly completed at the prime 2, and $[X,Y]$ denotes the graded abelian group of homotopy classes of maps from the spectrum $X$ to the spectrum $Y$.

\subsection{The impetus from homotopy theory} \label{subsec:impetus_homotopy} The mod 2 cohomology $H^*X$ of a spectrum $X$ is represented by the mod 2 Eilenberg-Mac Lane spectrum $H\FF_2$, i.e.,  $H^*X= [X, H\FF_2]$. The mod $2$ Steenrod algebra $\A=[H\FF_2, H\FF_2]$ is the algebra of (stable) cohomology operations.  As an algebra over $\FF_2$, $\A$ is generated by $\sq{i}$ for $i \geq 0$ which act by post-composition. Modules over $\A$ have been well-studied in the literature, especially in light of the Adams spectral sequence
\[
    \Ext_\A(H^*Y, H^* X) \Rightarrow [X,Y] \otimes \ZZ_2
\]
which approximates the stable homotopy groups of spheres when $X=Y=S$, the sphere spectrum.  

Given the subalgebra $\A(m)$ of $\A$ generated by $\{\sq{1},\sq{2},\dots,\sq{2^m}\}$, consider the $\A(m)$-module $\A \otimes_{\A(m)} \FF_2$, where the right action of $\A(m)$ on $\A$ is induced by the inclusion and the left action of $\A(m)$ on $\FF_2$ is induced by $\sq{0} \mapsto 1$ and $\sq{i} \mapsto 0$ for $i>0$.
The inclusions $\A(m)\to \A(m+1)$ give rise to an infinite tower of surjections
\begin{equation}\label{Eqn:Tower}
    \A \to \A \otimes_{\A(0)} \FF_2 \to \A \otimes_{\A(1)} \FF_2 \to \A \otimes_{\A(2)} \FF_2 \to \A \otimes_{\A(3)} \FF_2 \to \cdots.
\end{equation}
It will be convenient to put $\A(-1) = \{ \sq{0} \} = \FF_2$, so that $\A \cong \A \otimes_{\A(-1)} \FF_2$. Noting that $\A \cong H^* H\FF_2$, one can ask if the remaining modules in \eqref{Eqn:Tower} can be realized as the mod 2 cohomology of some spectrum. The case $m \geq 3$ requires the existence of a non-trivial map of spheres which has been shown not to exist. However, it is well-known that such spectra exist for $0\leq n\leq2$, namely
\begin{align*}
    H^*H\ZZ &\cong \A \otimes_{\A(0)} \FF_2, \\ 
    H^*\bo &\cong \A \otimes_{\A(1)} \FF_2, \\ 
    H^*\tmf &\cong \A \otimes_{\A(2)} \FF_2,
\end{align*}
where $H\ZZ$ is the integral Eilenberg-Mac Lane spectrum, $\bo$ is the connective real $K$-theory spectrum, and $\tmf$ is
the connective spectrum of  topological modular forms---an analog of $K$-theory with connections to modular forms in number theory (see \cite{Larson} for an instance of such a connection away from the prime 2).  Indeed, these spectra and their associated cohomology theories have become essential for computations inside the stable homotopy groups of spheres, and are interesting in their own right.

A change-of-rings theorem yields 
\[
    \Ext^{s,t}_\A(\A\otimes_{\A(m)} \FF_2, H^* X) \cong \Ext^{s,t}_{\A(m)}(\FF_2,H^*X).
\]
In particular, the $E_2$-term approximating the integral cohomology of $X$ can be determined by an understanding of the $\A(0)$-module structure of $H^* X$. Similarly, the $E_2$-terms approximating the connective real $K$-theory of $X$ and the $\tmf$-cohomology of $X$ can be determined by understanding the $\A(1)$- and $\A(2)$-module structures of $H^*X$, respectively. 

Modules over $\A(1)$ have been fully classified up to stable $\A(1)$ isomorphism \cite[Section 3]{AP76}. Two $R$-modules $M_1$ and $M_2$  are stably $R$ isomorphic if there exist free $R$-modules $F_1$ and $F_2$ such that $M_1 \oplus F_1 \cong M_2 \oplus F_2$. The original impetus of the current paper was a better understanding of the free $\A(1)$-module summands of $\A \otimes_{\A(m)} \FF_2$.  Such an understanding would ultimately provide a clearer picture of important computations currently in the literature, as we shall discuss in the following subsections.  See for example \cite{Dav87} and \cite{BBBCX}.

\subsection{Connection with binary partitions}

It is often convenient to perform computations in the dual $\A_* = \Hom_{\FF_2}(\A,\FF_2)$.  Indeed, while $\A$ is a graded noncommutative algebra with many relations, its dual $\A_* \cong \FF_2[\xi_1,\xi_2, \xi_3,\dots]$ is a polynomial ring with grading $\abs{\xi_i} = 2^{i} - 1$. Furthermore,
\begin{align*}
    H_*H\FF_2 &\cong (\A\otimes_{\A(-1)} \FF_2)_* \cong \A_*,\\ 
    H_*H\ZZ &\cong (\A \otimes_{\A(0)} \FF_2)_* \cong \FF_2[\xi_1^2, \xi_2,\xi_3, \xi_4,\dots], \\ 
    H_*\bo &\cong (\A \otimes_{\A(1)} \FF_2)_* \cong \FF_2[\xi_1^4, \xi_2^2, \xi_3, \xi_4, \dots], \\ 
    H_*\tmf &\cong (\A \otimes_{\A(2)} \FF_2)_* \cong \FF_2[\xi_1^8,\xi_2^4,\xi_3^2,\xi_4,\dots]
\end{align*}
(see \cite{Mil58, Koc81, Mah81, Rez07}). Upon replacing the grading mentioned above by the alternative grading convention $|\xi_i|=2^{i-1}$
(hereafter called a ``weight'' and denoted $\omega(\xi_i)=2^{i-1}$ following the language and notation of
\cite[\S3]{Mah81})
the reader should observe that $H(m) \cong (\A\otimes_{\A(m)}\FF_2)_*$.  Hence, Theorem~\ref{thm:main} and Corollary \ref{cor:polyring} together provide a recursive formula for the number of generators of $(\A \otimes_{\A(m)}\FF_2)_*$ of a given weight.

\begin{example}
The number of monomials of weight $n$ in $H_*H\FF_2$ and the number of monomials of weight $2n$ in $H_*H\ZZ$ are counted by the sequences $r_{-1}(n)$ and $r_0(2n)$, respectively, exhibited at the start of Section \ref{sec:intro}. Moreover,
\[
r_1(n)=\begin{cases}
1,&n=0,\\
r_1(n-4)+2r_1((n-4)/2),
&0<n\equiv4\mod 8,\\
r_1(n-4)+r_1(n/2)+
r_1((n-8)/2),&0<n\equiv0\mod8,\\
0,&\text{otherwise}.
\end{cases}
\]
and so the sequence $r_1(4n)$ counting the number of monomials of weight $4n$ in $H_*\bo$ has
\[
1, 3, 7, 13, 23, 37, 57, 83, 119, 165, 225, 299, 393, 507, 647, 813,
\ldots
\]
as its first few terms.
\end{example}




\begin{example}
Since
\[
r_2(n)=\begin{cases}
1,&n=0,\\
r_2(n-8)+3r_2((n-8)/2)+
r_2((n-24)/2),&0<n\equiv8\mod16,\\
r_2(n-8)+r_2(n/2)+3r_2((n-16)/2),
&0<n\equiv0\mod16,\\
0,&\text{otherwise},
\end{cases}
\]
it follows that 
the sequence $r_2(8n)$ counting the number of 
monomials of weight $8n$ in
$H_*\tmf$ has
\[
1,4,11,24,47,84,141,224,
343,508,733,1032,1425,1932,2579,
3392,\ldots
\]
as its first few terms.
\end{example}

\begin{example} 
As noted above, there does not exist a
spectrum with mod
2 homology isomorphic to
$(\A \otimes_{\A(m)} \FF_2)_*$ for $m\geq3$.  
Theorem \ref{thm:main}
implies
\[
r_3(n)=\begin{cases}
1,&n=0,\\
r_3(n-16)+4r_3((n-16)/2)
+4r_3((n-48)/2),
&0<n\equiv16\mod32,\\
r_3(n-16)+r_3(n/2)+6r_3((n-32)/2)
+r_3((n-64)/2),
&0<n\equiv0\mod32,\\
0,&\text{otherwise}.
\end{cases}
\]
and so $H(3)$, the first of the polynomial algebras $H(m)$ not realizable as the mod 2 homology of a spectrum, has $r_3(16n)$ monomials of weight $16n$,
where $r_3(16n)$ has
as its first few terms
\[
1,5,16,40,87,171,312,
536,879,1387, 
2120,3152,4577,6509,9088,12480,\ldots
\]
\end{example}

One can observe the partial sum relationships between the sequences in the above examples given by Theorem~\ref{thm:main_partial}.

\subsection{Integral Brown-Gitler spectra}\label{subsec:integralBGspectra}

Brown and Gitler \cite{BG73} constructed a family $\{B(j)\>\vert\>j \geq 0\}$ of $\ZZ/2\ZZ$-complete spectra whose cohomology algebras are cyclic as modules over $\A$. For a given $j$, the generator $\alpha : B(j) \to H\FF_2$ gives rise to a surjection $\alpha_* : B(j)_k X \to (H\FF_2)_k X$ in homology for $k < 2j + 2$ and any CW-complex $X$, resulting in these Brown-Gitler spectra having many applications in homotopy theory. This led Goerss, Jones, and Mahowald \cite{GJM86} to construct  analogous families of Brown-Gitler spectra over $H\ZZ$ and $\bo$ (as well as $BP\langle 1 \rangle$).  For the purposes of this paper, we will denote the Brown-Gitler, integral Brown-Gitler, and $\bo$ Brown-Gitler spectra by $B_0(j)$, $B_1(j)$, and $B_2(j)$, respectively.  With the weight $\omega$ defined above, the homology of these Brown-Gitler spectra are given by
\begin{equation}\label{eq:homologyBG}
    H_* B_i(j) \cong \{ x \in (\A\otimes_{\A(i-1)} \FF_2)_* \>\vert\> \omega(x) \leq j2^i\}
\end{equation}
where $(\A\otimes_{\A(i-1)} \FF_2)_*$ denotes the $\FF_2$-dual of $(\A\otimes_{\A(i-1)} \FF_2)$.

The stable $\A(1)$-isomorphism classes of $H_*B_1(j)$ are related to the family of Milgram modules \cite{Mil75} which Davis, Gitler, and Mahowald \cite{DGM81} denote $Q_{i,n}$ for $i\geq0$ and $n\in\{0,1,2,3\}$. These modules are displayed in Figure~\ref{fig:Milgram}.  The circles represent generators of $\FF_2$ in the degree indicated by the column, while straight and curved lines represent an action of $\sq{1}$ and $\sq{2}$, respectively.

\begin{figure}
    \centering
    \scalebox{.9}{\begin{tikzpicture}[scale=0.80]
\foreach \i in {0,1,2,3}{
    \foreach \dim in {0,2,3,4,5,6,10,11,12}{
        \draw (\dim, -4*\i) circle [radius=2pt, fill=white];
    };
    \foreach \dim in {0,1,2,3,...,12}{
        \node (Q\i\dim) at (\dim, -4*\i) {};
    };
};

\foreach \i in {0,1,2,3}{
    \node (Qell\i) at (8, -4*\i) {$\cdots$};
}

\draw (13, -4) circle [radius=2pt, fill=white];
\node (Q113) at (13, -4) {};
\draw (14, -4) circle [radius=2pt, fill=white];
\node (Q114) at (14, -4) {};

\draw (9, -7) circle [radius=2pt, fill=white];
\node (Q29a) at (9, -7) {};
\draw (10, -7) circle [radius=2pt, fill=white];
\node (Q210a) at (10, -7) {};
\draw (12, -7) circle [radius=2pt, fill=white];
\node (Q212a) at (12, -7) {};
\draw (13, -7) circle [radius=2pt, fill=white];
\node (Q213a) at (13, -7) {};

\draw (8, -11) circle [radius=2pt, fill=white];
\node (Q38a) at (8, -11) {};
\draw (9, -11) circle [radius=2pt, fill=white];
\node (Q39a) at (9, -11) {};

\foreach \i in {0,1,2,3}{
        \draw (Q\i0) to[out=-45,in=-135] (Q\i2);
        \draw (Q\i4) to[out=-45,in=-135] (Q\i6);
        \draw (Q\i10) to[out=-45,in=-135] (Q\i12);
        \draw (Q\i3) to[out=45, in=135] (Q\i5);
        \draw (Q\i2) -- (Q\i3);
        \draw (Q\i4) -- (Q\i5);
        \draw (Q\i6) -- (Q\i7);
        \draw (Q\i9) -- (Q\i10);
        \draw (Q\i11) -- (Q\i12);
        
};

\draw (Q113) -- (Q114);
\draw (Q111) to[out=45, in =135] (Q113);

\draw (Q29a) -- (Q210a);
\draw (Q212a) -- (Q213a);
\draw (Q29a) to[out=-90, in=90] (Q211);
\draw (Q211) to[out=90, in=-90] (Q213a);
\draw (Q210a) to[out=45, in=135] (Q212a);

\draw (Q38a) -- (Q39a);
\draw (Q38a) to[out=-60, in=120] (Q310);
\draw (Q39a) to[out=-60, in=120] (Q311);

\foreach \i in {0, 1, 2, 3}{
    \node (label\i) at (-1,-4*\i) {$Q_{\i,n}$};
};

\node (labeldim0) at (0,1.25) {$0$};
\node (labeldim4) at (4,1.25) {$4$};
\node (labeldim4n) at (11,1.25) {$4n$};

\draw[dash dot] (0,1)--(0,-13);
\draw[dash dot] (4,1)--(4,-13);
\draw[dash dot] (11,1)--(11,-13);

\end{tikzpicture}}
    \caption{Milgram modules $Q_{i,n}$ ($0\leq i\leq3$)}
    \label{fig:Milgram}
\end{figure}
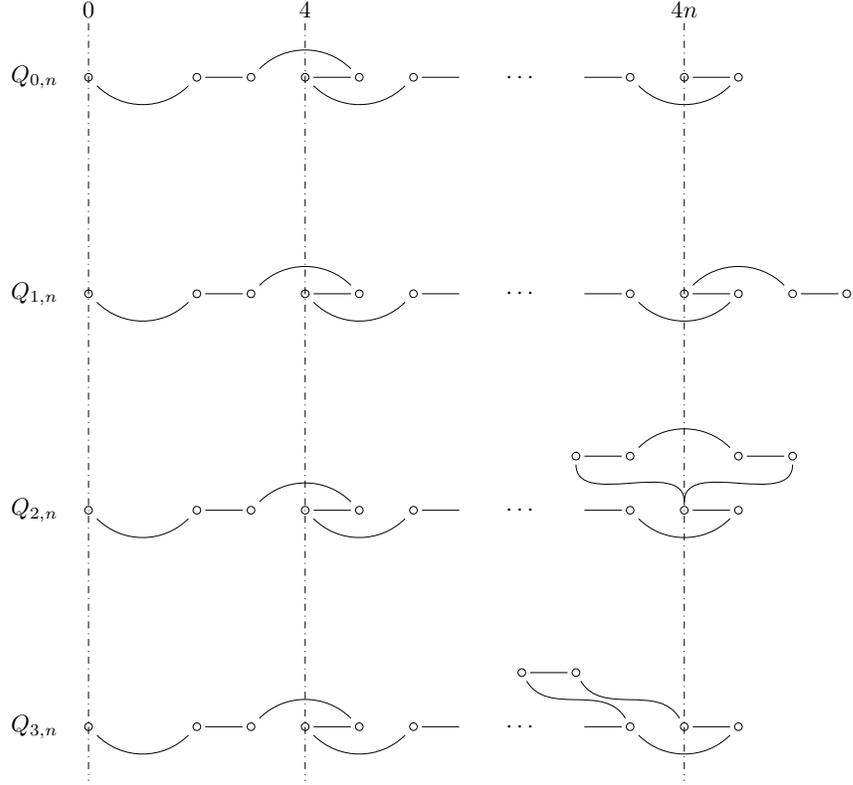

\subsection{Rank of free $\A(1)$-module summands}

\begin{lemma}[Lemma 3.12 \cite{DGM81}]\label{lem:stable-iso-Milgram}
There is an isomorphism $H_* B_1(j) \cong F_{1,j} \oplus Q_{\alpha(j),j-D}$ of $\A(1)$-modules, where
\[
D = \begin{cases}
        2\ell, &\text{if } \alpha(j) = 4\ell,\\
        2\ell+1, &\text{if } 4\ell + 1 \leq \alpha(j) \leq 4\ell+3,
\end{cases}
\]
$F_{1,j}$ is a free $\A(1)$-module, and the first subscript $\alpha(j)$ of $Q_{\alpha(j),j-D}$ is taken modulo 4.
\end{lemma}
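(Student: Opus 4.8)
The plan is to work on the homology side throughout, using \eqref{eq:homologyBG} to realize $H_* B_1(j)$ concretely and then to peel off free $\A(1)$-summands until a single Milgram module remains. By \eqref{eq:homologyBG} with $i=1$,
\[
H_* B_1(j) \cong \{x \in \FF_2[\xi_1^2,\xi_2,\xi_3,\dots] \st \omega(x)\leq 2j\},
\]
so the first task is to make the left $\A(1)$-action explicit. Since $\sq{1}$ and $\sq{2}$ act dually to multiplication in $\A$, their effect on a monomial is computed from the Milnor coproduct on $\A_*$; recording these formulas on the monomial basis turns $H_* B_1(j)$ into a completely combinatorial object whose $\sq{1}$- and $\sq{2}$-actions I can read off directly.

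Second, I would invoke the Adams--Priddy classification \cite{AP76}: up to stable $\A(1)$-isomorphism, every $\A(1)$-module is a direct sum of free modules and finitely many explicit indecomposables, and the indecomposables that can occur as summands here are exactly the Milgram modules $Q_{i,n}$ of Figure~\ref{fig:Milgram}. This reduces the lemma to two assertions: (a) the non-free part of $H_* B_1(j)$ is a single $Q_{i,n}$, and (b) the indices are $i \equiv \alpha(j) \bmod 4$ and $n = j - D$. Once (a) and (b) are in hand, the rank of $F_{1,j}$ is forced by a dimension count, since $\dim_{\FF_2}\A(1) = 8$ and $\dim_{\FF_2} H_* B_1(j)$ is simply the number of weight-$\leq 2j$ monomials.

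To establish (a) and (b) I would induct on $j$ through its binary expansion, using the cofiber sequences among integral Brown--Gitler spectra constructed in \cite{GJM86}. Passing from $j$ to $2j$ leaves $\alpha(j)$ unchanged, while passing to $2j+1$ increases it by one; the induction hypothesis then predicts how the type $\alpha(j)\bmod 4$ cycles and how the length $j - D$ grows as weight is added. The content is to check that each cofiber sequence splits off the expected number of free $\A(1)$-summands and attaches the new top cells in the configuration dictated by the next value of $\alpha(j)\bmod 4$---this is precisely the four-fold periodicity visible in Figure~\ref{fig:Milgram} and the origin of the two cases in the definition of $D$.

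The hard part will be (b), and specifically the edge behaviour at the top of the weight filtration: identifying exactly which cells of highest weight remain unpaired after the free summands are removed, and verifying that they assemble into $Q_{\alpha(j),\,j-D}$ rather than into a free module carrying a differently-truncated tail. This is where the distinction $\alpha(j)\equiv 0\bmod 4$ versus $\alpha(j)\not\equiv 0\bmod 4$ enters the formula for $D$, and it demands the most careful bookkeeping of the $\sq{1}$- and $\sq{2}$-action near the top of the module; the remaining steps are routine once this boundary analysis is pinned down.
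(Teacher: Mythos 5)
This lemma is not proved in the paper at all: it is imported verbatim as Lemma 3.12 of \cite{DGM81}, so there is no internal argument to compare yours against. The relevant question is therefore whether your sketch would stand on its own as a proof, and it would not, for two concrete reasons. First, your appeal to the Adams--Priddy classification is both overstated and question-begging. What \cite[Section 3]{AP76} provides is a classification of $\A(1)$-modules up to \emph{stable} isomorphism; the list of indecomposable $\A(1)$-modules occurring in such decompositions is much larger than the four Milgram families $Q_{i,n}$ (it includes, e.g., the ``question mark'' and ``joker'' modules and infinitely many others). So your step (a) --- that the non-free part of $H_*B_1(j)$ is a \emph{single} module of the form $Q_{i,n}$ --- is not a consequence of any general classification; it is precisely the substance of the lemma, and your outline supplies no mechanism for ruling out the other indecomposables.

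Second, the inductive argument through the binary expansion of $j$ is announced but never executed. The identification of the index $D$, the four-fold periodicity in $\alpha(j) \bmod 4$, and the verification that each cofiber sequence from \cite{GJM86} splits off exactly the expected free summands are all deferred to what you yourself call ``the hard part'' requiring ``the most careful bookkeeping.'' Deferring that step concedes the entire content of the lemma: the two cases in the formula for $D$ and the subscript $\alpha(j) \bmod 4$ are exactly what the bookkeeping must produce, and nothing in your sketch constrains how the top-weight cells assemble after free summands are removed. A correct completion along your lines is plausible --- inducting via the exact sequences relating $H_*B_1(2j)$ and $H_*B_1(2j+1)$ is in the spirit of how such results are established in the literature --- but as written this is a plan with its central verification missing, not a proof.
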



An application of Theorem~\ref{thm:main} therefore determines the free rank of $F_{1,j}$, yielding the following theorem (which is merely a restatement of Theorem \ref{thm:bgspectra}).
\begin{theorem}\label{Thm:Rank1}
The rank of $F_{1,j}$ as an $\A(1)$-module is $f_{1,j} = \dfrac{1}{8}\left(r_1(4j)-b(j)\right)$ where 
\begin{equation}\label{eq:Mj}
b(j) = \begin{cases}
    4j - 2\alpha(j) + 1, &\text{if }\alpha(j)\equiv 0,1\mod{4},\\
    4j - 2\alpha(j) + 5, &\text{if }\alpha(j)\equiv 2,3\mod{4}.
\end{cases}
\end{equation}
\end{theorem}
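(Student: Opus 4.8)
The plan is to reduce the computation of the free rank $f_{1,j}$ to an $\FF_2$-dimension count. Since $\A(1)$ has dimension $8$ over $\FF_2$, a free $\A(1)$-module of rank $f$ contributes $8f$ to the total $\FF_2$-dimension. Taking $\FF_2$-dimensions in the stable isomorphism $H_* B_1(j)\cong F_{1,j}\oplus Q_{\alpha(j),j-D}$ of Lemma~\ref{lem:stable-iso-Milgram} therefore gives
\[
\dim_{\FF_2} H_* B_1(j)=8f_{1,j}+\dim_{\FF_2} Q_{\alpha(j),j-D},
\]
so the theorem will follow once I establish the two equalities $\dim_{\FF_2} H_* B_1(j)=r_1(4j)$ and $\dim_{\FF_2} Q_{\alpha(j),j-D}=b(j)$.

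For the first equality I would use the homology description \eqref{eq:homologyBG}, which for $i=1$ identifies $H_* B_1(j)$ with the span of the monomials $x\in(\A\otimes_{\A(0)}\FF_2)_*\cong\FF_2[\xi_1^2,\xi_2,\xi_3,\dots]$ of weight $\omega(x)\leq 2j$. Every generator $\xi_1^2,\xi_2,\xi_3,\dots$ has even weight, so only even weights occur, and by Corollary~\ref{cor:polyring} (via $H(0)\cong(\A\otimes_{\A(0)}\FF_2)_*$) the number of monomials of weight $2k$ is $r_0(2k)$. Summing over $0\leq k\leq j$ and invoking the partial-sum identity of Theorem~\ref{thm:main_partial} with $m=1$, namely $r_1(4j)=\sum_{k=0}^{j} r_0(2k)$, yields $\dim_{\FF_2} H_* B_1(j)=r_1(4j)$.

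For the second equality I would read the $\FF_2$-dimension of the Milgram module $Q_{i,n}$ directly off Figure~\ref{fig:Milgram}. The four module types share a common repeating ``$\bo$-cell'' that contributes $4$ generators per unit increment of the second index, producing a leading term $4n$, together with $i$-dependent head and tail generators visible as the extra dots in rows $i=1,2,3$. Counting these I expect $\dim_{\FF_2} Q_{i,n}=4n+c_i$ with $(c_0,c_1,c_2,c_3)=(1,3,5,3)$. Substituting $i\equiv\alpha(j)\bmod 4$ and $n=j-D$, so that $4n=4j-4D$ with $4D$ equal to $2\alpha(j)$ or $2\alpha(j)\pm 2$ according to the cases in Lemma~\ref{lem:stable-iso-Milgram}, then collapses the four residue classes of $\alpha(j)$ into the two-case formula \eqref{eq:Mj} for $b(j)$, the residues $0,1$ giving $+1$ and the residues $2,3$ giving $+5$.

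The main obstacle is this last step: correctly extracting $\dim_{\FF_2} Q_{i,n}$ as an affine function of $n$ from the schematic figure---pinning down the repeating cell and the $i$-dependent boundary generators---and then carrying out the bookkeeping that combines the mod-$4$ reduction of the first index with the case-dependent value of $D=D(\alpha(j))$. The arithmetic must align the top dimension $4n$ of each module type with $4j-2\alpha(j)$ so that the four values $c_i$ merge into the constants $+1$ and $+5$; verifying this collapse is where the care lies, the remaining steps being essentially formal.
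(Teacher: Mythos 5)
Your proposal is correct and follows essentially the same route as the paper: take $\FF_2$-dimensions in the splitting of Lemma~\ref{lem:stable-iso-Milgram}, read off $\dim_{\FF_2}Q_{i,n}=4n+c_i$ with $(c_0,c_1,c_2,c_3)=(1,3,5,3)$ from Figure~\ref{fig:Milgram}, check that the four residue classes of $\alpha(j)$ collapse to the two cases of $b(j)$, and divide the complementary dimension by $8=\dim_{\FF_2}\A(1)$. Your justification of $\dim_{\FF_2}H_*B_1(j)=r_1(4j)$ via Corollary~\ref{cor:polyring} and the $m=1$ case of Theorem~\ref{thm:main_partial} is in fact spelled out more explicitly than in the paper, which cites Theorem~\ref{thm:main} and \eqref{eq:homologyBG} for this step.
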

\begin{proof}
From Figure \ref{fig:Milgram}, we see that
\begin{equation*}
\dim_{\FF_2}(Q_{i,n})=4n+\begin{cases} 1,&i=0,\\ 3&i=1,3,\\ 5,&i=2.\end{cases}
\end{equation*}
Lemma \ref{lem:stable-iso-Milgram} therefore yields
\begin{align*}
\begin{split}
\dim_{\FF_2}(Q_{\alpha(j),j-D})&=4j-4\begin{cases}2\floor{\alpha(j)/4},&\alpha(j)\equiv0\mod4\\2\floor{\alpha(j)/4}+1,&\alpha(j)\not\equiv0\mod4\end{cases}+\begin{cases}1,&\alpha(j)\equiv0\mod4\\3,&\alpha(j)\equiv1,3\mod4\\5,&\alpha(j)\equiv2\mod4\end{cases}\\
&=4j+\begin{cases}-2\alpha(j)+1,&\alpha(j)\equiv0\mod4\\-2(\alpha(j)-1)-1,&\alpha(j)\equiv1\mod4\\-2(\alpha(j)-2)+1,&\alpha(j)\equiv2\mod4\\-2(\alpha(j)-3)-1,&\alpha(j)\equiv3\mod4.\end{cases}
\end{split}
\end{align*}
Note that Theorem~\ref{thm:main} and \eqref{eq:homologyBG} together imply that $r_1(4j)$ is the $\FF_2$-dimension of $H_* B_1(j)$.  By Lemma~\ref{lem:stable-iso-Milgram}, the remaining classes generate $F_{1,j}$.  Since $\A(1)$ has 8 classes, the result follows.
\end{proof}

\begin{remark}
The sequence $\{f_{1,j}\}$ has as its first few terms
\[
0, 0, 0, 0, 1, 2, 4, 7, 11, 16, 23, 32, 43, 57, 74, 95, \dots.
\]
The reader should notice the first appearance of a free $\A(1)$-summand among the homologies of integral Brown-Gitler spectra is in $H_* B_1(4)$. 
\end{remark}

Consider the elements $N_n(\ell) = \{x \in (\A\otimes_{\A(n)}\FF_2)_*\> \vert \> \omega(s) = \ell\}$ of homogeneous weight $\ell$. For $m \leq n$, there is an $\A(m)$-module isomorphism
\begin{equation}\label{eq:Am-module-splitting}
    \left( \A\otimes_{\A(n)}\FF_2\right)_* \cong \bigoplus_{j \geq 0} N_n(j2^{n+1}).
\end{equation}
This is a straightforward extension of \cite[Lemma 2.1]{Mah81} for $m=n=1$ and \cite[Proposition 2.3]{Bai10} for $m=1$ and $n=2$.  The \emph{Verschiebung} is the algebra homomorphism $V:\A_* \to \A_*$ defined on generators by 
\[
    V(\xi_i) = \begin{cases}
            1,  &i=0,1, \\
            \xi_{i-1}, &i \geq 2.\\
    \end{cases}
\]
An extension of the proof of \cite[Proposition 2.3]{Bai10} shows that $V$ induces an $\A(m)$-module isomorphism
\[
N_n(j2^{n+1})\cong \Sigma^{j2^{n+1}}\{x \in (\A \otimes_{\A(n-1)} \FF_2)_* \>\vert\>\omega(x)\leq j 2^n \}.
\]
In particular $\Sigma^{j2^{i+1}}H_* B_i(j) \cong N_i(j2^{i+1})$ and, as a result, Theorem~\ref{Thm:Rank1} can be used to compute the number of free copies of $\A(1)$ in $(\A\otimes_{\A(m)}\FF_2)_*$ for all $m \geq 1$.  An example of this is given by the following theorem of the first author for $m=2$.

\begin{theorem}[Theorem 4.1 \cite{Bai10}]\label{thm:bosmtmf}
There is an isomorphism of graded $\bo_*$-algebras 
\begin{equation}\label{eq:bosmtmf_iso}
\pi_*(\bo\sm\tmf) \cong \dfrac{\bo_*[\sigma, b_i, \mu_i\>\vert\>i \geq 0]}{(\mu b_i^2-8b_{i+1}, \mu b_i -4 \mu_i, \eta b_i)} \oplus F
\end{equation}
where $\abs{\sigma}=8$, $\abs{b_i} = 2^{i+4} -4$, $\abs{\mu_i} = 2^{i+4}$ and $F$ is a direct sum of $\FF_2$ in varying dimensions.
\end{theorem}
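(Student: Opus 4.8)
The plan is to compute $\pi_*(\bo\sm\tmf)$, which is the $\bo$-homology $\bo_*\tmf$, by running the mod $2$ Adams spectral sequence and then extracting the ring structure from the additive answer. Since $\bo$ and $\tmf$ are ring spectra, $\bo\sm\tmf$ is one as well, so $\pi_*(\bo\sm\tmf)$ is a graded $\bo_*$-algebra, which explains the shape of the claimed answer. The essential reduction is that $H^*\bo\cong\A\otimes_{\A(1)}\FF_2$; the K\"unneth isomorphism and a shearing give $H^*(\bo\sm\tmf)\cong\A\otimes_{\A(1)}H^*\tmf$, whence the change-of-rings isomorphism $\Ext_\A(H^*(\bo\sm\tmf),\FF_2)\cong\Ext_{\A(1)}(H^*\tmf,\FF_2)$ recasts the entire $E_2$-page in terms of the $\A(1)$-module structure of $H^*\tmf\cong\A\otimes_{\A(2)}\FF_2$. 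So the first task is to decompose this module over $\A(1)$.

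To do so I would combine the splitting \eqref{eq:Am-module-splitting} with the Verschiebung isomorphism, which (taking $n=2$) writes $(\A\otimes_{\A(2)}\FF_2)_*$ as $\bigoplus_{j\geq0}\Sigma^{8j}H_*B_2(j)$ and isolates the $\bo$-Brown-Gitler spectra $B_2(j)$ as the building blocks. Each $H_*B_2(j)$ decomposes over $\A(1)$ into a free summand together with a short, explicit list of non-free summands, just as in the $B_1$-case of Lemma~\ref{lem:stable-iso-Milgram}. Because a free $\A(1)$-module is projective, its $\Ext_{\A(1)}(-,\FF_2)$ is concentrated in filtration zero and supports no $h_0$-tower, so it contributes $\FF_2$-classes to homotopy; these make up the summand $F$, and their number and degrees are exactly what the free-rank count of Theorem~\ref{Thm:Rank1}, in its $B_2$ analogue built from the weights $r_2(8j)$ and \eqref{eq:homologyBG}, enumerates. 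The non-free ``lightning-flash'' summands instead have $\Ext_{\A(1)}(-,\FF_2)$ equal to a shifted copy of the $\bo$-chart $\Ext_{\A(1)}(\FF_2,\FF_2)$, and these furnish the $\bo_*$-towers making up the polynomial part.

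Next I would assemble these contributions over all $j$, tracking the suspensions $\Sigma^{8j}$, to read off the additive $E_2$-page, and then identify a minimal multiplicative generating set by weight: $\sigma$ in degree $8$, and the families $b_i$ and $\mu_i$ in degrees $2^{i+4}-4$ and $2^{i+4}$, arising from the pieces indexed by $j$ near a power of $2$. One then checks that the spectral sequence collapses (or resolves the few differentials) and solves the additive extension problems governed by the action of $\eta$ and of the degree-$4$ and degree-$8$ generators of $\bo_*$.

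The hard part will be the \emph{multiplicative} structure. The $\Ext$ computation and the module splitting above deliver only the $\bo_*$-module structure of $\pi_*(\bo\sm\tmf)$, whereas the relations $\mu b_i^2-8b_{i+1}$, $\mu b_i-4\mu_i$, and $\eta b_i=0$ encode products, which are invisible at the level of $\Ext$. I would pin these down by comparison with the co-operation algebra $\bo_*\bo$ and the known ring $\bo_*$, exploiting the multiplicativity of the Brown-Gitler filtration; determining these products, and the hidden extensions they may force, is where the genuine work resides.
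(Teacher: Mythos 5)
Your proposal takes essentially the same route as the paper (which itself cites the full details to \cite{Bai10}): the mod 2 Adams spectral sequence, change-of-rings to $\Ext_{\A(1)}(\FF_2, H_*\tmf)$, the weight/Verschiebung splitting \eqref{eq:Am-module-splitting} of $H_*\tmf$ into suspended Brown-Gitler homologies, and the decomposition of those into free $\A(1)$-summands (which account for $F$) plus Milgram modules (which account for the polynomial part). The only real difference is cosmetic: you stop the splitting at the $\bo$-Brown-Gitler pieces $H_*B_2(j)$ and assert a Milgram-type decomposition there, whereas the paper applies the Verschiebung once more to reduce to integral pieces $H_*B_1(i)$, where Lemma~\ref{lem:stable-iso-Milgram} applies verbatim.
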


The proof of Theorem~\ref{thm:bosmtmf} involves a computation of the Adams $E_2$-term
\[
E_2^{s,t}=\Ext_{\A}^{s,t}(H^*(\bo\sm \tmf), \FF_2) \Rightarrow \pi_{t-s} (\bo \sm \tmf)
\]
which, after applying a change-of-rings theorem and dualizing, becomes $\Ext_{\A(1)}^{s,t}(\FF_2, H_* \tmf)$. By the argument immediately preceding Theorem~\ref{thm:bosmtmf}, $H_* \tmf \cong \bigoplus_{j \geq 0} \bigoplus_{0 \leq i \leq j} \Sigma^{4i+8j} H_* B_1(j)$.  In particular
\begin{align*}
    H_*\tmf &\cong \bigoplus_{j \geq 0} \bigoplus_{0 \leq i \leq j} \Sigma^{4i+8j} \left(F_{1,j} \oplus Q_{\alpha(j),j-D}\right) \\
    &\cong \bigoplus_{j \geq 0} \bigoplus_{0 \leq i \leq j} \Sigma^{4i+8j} Q_{\alpha(j),j-D} \oplus \bigoplus_{j \geq 0} \bigoplus_{0 \leq i \leq j} \Sigma^{4i+8j} F_{1,j} \\
    &\cong \bigoplus_{j \geq 0} \bigoplus_{0 \leq i \leq j} \Sigma^{4i+8j} Q_{\alpha(j),j-D} \oplus \bigoplus_{j\geq 0}F_{2,j}
\end{align*}
where $F_{2,j} = \bigoplus_{0 \leq i \leq j} \Sigma^{4i+8j} F_{1,j}$ can be regarded as the analog of $F_{1,j}$ for $H_* B_2(j)$. With this notation, the module $F$ in Theorem~\ref{thm:bosmtmf} is given by $F = \Ext_{\A(1)}^{s,t}(\bigoplus_{j\geq0} F_{2,j}, \FF_2) \cong \bigoplus_{j \geq 0} \FF_2^{\dim_{\A(1)}(F_{2,j})}$.  If we put $f_{2,j}=\dim_{\A(1)}(F_{2,j})$, then
\[
f_{2,j}=\sum_{i = 0}^{j} \dim_{\A(1)}(F_{1,j})
\]
so that $f_{2,j}$ is the number of free $\A(1)$ summands in $H_* B_2(j)$ and $\{f_{2,j}\}$ is the sequence of partial sums of $\{f_{1,j}\}$. This partial sum pattern can be continued inductively, even though a geometric interpretation of the relevant polynomial rings is lost for $m \geq 3$ as we noted in Subsection \ref{subsec:impetus_homotopy}.

\begin{remark}
As a result of \eqref{eq:Am-module-splitting},
the Verschiebung homomorphism $V:N_{n}(2^{n+1}k)\to\bigoplus_{i=0}^k N_{n-1}(2^ni)$ is an isomorphism of $\A(m)$-modules for $m\leq n-1$. Since $\dim_{\FF_2}(N_{\ell}(k))=r_{\ell}(k)$ for all $k \geq 0$, the Verschiebung gives us another way of viewing Theorem \ref{thm:main_partial}.
\end{remark}

\begin{remark}
While we have provided a method for counting the rank of the free $\A(1)$ summands of $(\A\otimes_{\A(m)}\FF_2)_*$ for $m\geq1$, it would be desirable to determine the degree of the corresponding generators.  This will be the subject of future work.
\end{remark}

\section{Preliminary results on binary partitions}\label{sec:preliminary_results}
Let $\pi=(n;\alphas)$ denote the binary partition $\pi$ of $n$ with $\alpha_1$ the coefficient on $2^0$, $\alpha_2$ the coefficient on $2^1$, etc., and let $P_m(n)$ denote the set of all binary partitions $\pi=(n;\alphas)$ of $n$ satisfying the divisibility conditions given by \eqref{eq:divisibility}, so that $r_m(n)=|P_m(n)|$.
We begin this section by bifurcating $P_m(n)$ according
to whether or not a 1 appears as a part
in a given partition.  
Define
\begin{align*}
P_{m,1}(n)&=\{\pi\in P_m(n)\st
\alpha_1>0\},\\
P_{m,0}(n)&=\{\pi\in P_m(n)\st
\alpha_1=0\}
\end{align*}
so that $P_m(n)=P_{m,1}(n)\sqcup
P_{m,0}(n)$. 
We now divide $P_{m,0}(n)$ into
$2^{m+1}$
disjoint collections according to whether
$\alpha_i$ is congruent to 0 or
$2^{m+2-i}$ modulo $2^{m+3-i}$
for $2\leq i\leq m+2$.
Let $\epsilons$ be a finite
sequence of integers taking values
in $\{0,1\}$.
If we define
\[
P_{m,0}^{\epsilons}(n)=
\{\pi\in 
P_{m,0}(n)
\st\alpha_i\equiv \ve_i2^{m+2-i}
\mod 2^{m+3-i},
2\leq i\leq m+2\}
\]
then
\[
P_{m,0}(n)=\bigsqcup_{\ve_i\in\{0,1\}}
P_{m,0}^{\epsilons}.
\]
\begin{lemma}\label{shift}
The map $\sigma:
P_{m,0}^{\zeros}(n)\to 
P_m\left(\frac n2\right)$ defined by
\[
\sigma:(n;\zeroalphas)\mapsto
\left(\frac n2;\alpha_2,\alpha_3\ldots
\right)
\]
is a bijection.
\end{lemma}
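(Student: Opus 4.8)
The plan is to prove well-definedness of $\sigma$ and then exhibit an explicit two-sided inverse, so that the entire lemma reduces to a comparison of divisibility conditions under a shift of the indexing. Define $\tau:P_m(n/2)\to P_{m,0}^{\zeros}(n)$ by prepending a zero coefficient, namely $\tau:(n/2;\beta_1,\beta_2,\ldots)\mapsto(n;0,\beta_1,\beta_2,\ldots)$. Granting that both $\sigma$ and $\tau$ land in their claimed codomains, the compositions $\sigma\circ\tau$ and $\tau\circ\sigma$ are visibly the identity, since one map deletes a leading zero and re-indexes downward while the other inserts a leading zero and re-indexes upward. The crux is therefore entirely the well-definedness of the two maps.

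First I would record the conditions that actually define $P_{m,0}^{\zeros}(n)$. Membership in $P_m(n)$ supplies $2^{m+2-i}\mid\alpha_i$ for $1\leq i\leq m+1$; membership in $P_{m,0}(n)$ forces $\alpha_1=0$; and the superscript $\zeros$ imposes $2^{m+3-i}\mid\alpha_i$ for $2\leq i\leq m+2$. Because $2^{m+2-i}$ divides $2^{m+3-i}$, the congruence conditions subsume the original $P_m$ conditions throughout the overlapping range $2\leq i\leq m+1$, so the effective constraints on an element of $P_{m,0}^{\zeros}(n)$ are simply $\alpha_1=0$ together with $2^{m+3-i}\mid\alpha_i$ for $2\leq i\leq m+2$, with no constraint for $i\geq m+3$. (Note that $\alpha_1=0$ forces $n$ even, so $n/2\in\ZZ$ whenever the domain is nonempty.)

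Next I would carry out the index shift. Writing $\beta_j=\alpha_{j+1}$ for the coefficients of $\sigma(\pi)$, the substitution $i=j+1$ converts the constraint $2^{m+3-i}\mid\alpha_i$ over $2\leq i\leq m+2$ into $2^{m+2-j}\mid\beta_j$ over $1\leq j\leq m+1$, which is exactly the defining divisibility condition for $P_m(n/2)$. This shows $\sigma(\pi)\in P_m(n/2)$, and reading the same computation backwards shows $\tau$ lands in $P_{m,0}^{\zeros}(n)$: given the $P_m(n/2)$ conditions on the $\beta_j$, setting $\alpha_1=0$ and $\alpha_{j+1}=\beta_j$ reproduces precisely the effective constraints recorded above.

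The step requiring the most care is the boundary index $i=m+2$. There the $P_m(n)$ divisibility conditions contribute nothing, since they terminate at $i=m+1$, and the only constraint comes from the superscript $\zeros$, namely $2\mid\alpha_{m+2}$; under the shift this becomes the topmost condition $2\mid\beta_{m+1}$ for $P_m(n/2)$. Verifying that this endpoint matches—rather than being off by one—is the single place where a miscount could derail the argument. Once it is confirmed, the bijection follows immediately from the explicit inverse $\tau$.
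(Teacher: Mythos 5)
Your proof is correct and takes essentially the same approach as the paper: both verify well-definedness by the index shift converting the conditions $2^{m+3-i}\mid\alpha_i$ ($2\leq i\leq m+2$) into the defining conditions $2^{m+2-j}\mid\beta_j$ ($1\leq j\leq m+1$) of $P_m\left(\frac n2\right)$, and both exhibit the prepend-a-zero map as an explicit two-sided inverse. Your extra observations (that the superscript conditions subsume the $P_m(n)$ divisibility conditions in the overlapping range, and that $\alpha_1=0$ forces $n$ even) are finer points the paper leaves implicit, but the argument is the same.
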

\begin{proof}
A partition $\pi=(n;\zeroalphas)
\in P_{m,0}^{\zeros}(n)$ satisfies
$2^{m+3-i}\mid\alpha_i$ for
$2\leq i\leq m+2$.  Therefore
\[
\sigma(\pi)=\left(\frac n2;
\alpha_2,\alpha_3,\ldots\right)
=\left(\frac n2;\beta_1,\beta_2,
\ldots\right)
\]
where $2^{m+2-i}\mid
\alpha_{i+1}=\beta_i$
for $1\leq i\leq m+1$,
which implies
$\sigma(\pi)
\in P_m\left(\frac n2
\right)$.  If 
$\pi'=\left(
\frac n2;\alphas
\right)
\in P_m\left(\frac
n2\right)$ and we define
$\sigma^{-1}$ by
\[
\sigma^{-1}(\pi')=
(n;0,\alphas)=
(n;0,\gamma_2,\gamma_3,\ldots)
\]
we see that $2^{m+3-i}\mid
\alpha_{i-1}=\gamma_i$
for $2\leq i\leq m+2$.
Hence $\sigma^{-1}(\pi')\in
P_{m,0}^{\zeros}(n)$, 
and it follows from the
formulas for $\sigma$ and
$\sigma^{-1}$ that $\sigma\circ
\sigma^{-1}=1_{P_{m,0}^{\zeros}(n)}$
and
$\sigma^{-1}\circ\sigma
=1_{P_m\left(\frac
n2\right)}$.
\end{proof}
\begin{lemma}\label{taus}
If $\ve_i=1$ for some
$i$, $2\leq i\leq m+2$, 
the map 
\[
\tau_i:
P_{m,0}^{\ve_2,\ldots,
\ve_i,
\ldots,\ve_{m+2}}(n)\to
P_{m,0}^{\ve_2,\ldots,0,
\ldots,\ve_{m+2}}(n-2^{m+1})
\]
defined by
\[
\tau_i:(n;\zeroalphas)\mapsto
(n-2^{m+1};0,\alpha_2,\ldots,
\alpha_i-2^{m+2-i},\ldots)
\]
is a bijection.  
\end{lemma}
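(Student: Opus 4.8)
The plan is to follow the template of the proof of Lemma~\ref{shift}: exhibit $\tau_i$ together with an explicit inverse, and verify that both are well-defined by tracking the three kinds of conditions that cut out the sets $P_{m,0}^{\epsilons}$. The only thing $\tau_i$ does is decrease the single coefficient $\alpha_i$ by $2^{m+2-i}$, leaving all other coefficients (and $\alpha_1=0$) untouched. Since $\alpha_i$ contributes $\alpha_i\cdot 2^{i-1}$ to the integer being partitioned, replacing $\alpha_i$ by $\alpha_i-2^{m+2-i}$ decreases that contribution by exactly $2^{m+2-i}\cdot 2^{i-1}=2^{m+1}$, which explains the target $n-2^{m+1}$.

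First I would check that $\tau_i$ lands in $P_{m,0}^{\ve_2,\ldots,0,\ldots,\ve_{m+2}}(n-2^{m+1})$, which requires three verifications. (i) Nonnegativity of the new coefficient: here the hypothesis $\ve_i=1$ is essential. The condition $\alpha_i\equiv 2^{m+2-i}\bmod 2^{m+3-i}$ forces $\alpha_i/2^{m+2-i}$—an integer by \eqref{eq:divisibility} when $i\leq m+1$, and trivially an integer when $i=m+2$ since $2^{m+2-i}=1$—to be odd, hence at least $1$, so $\alpha_i\geq 2^{m+2-i}$ and $\alpha_i-2^{m+2-i}\geq 0$. (ii) Divisibility \eqref{eq:divisibility}: for $i\leq m+1$ we have $2^{m+2-i}\mid\alpha_i$, and subtracting $2^{m+2-i}$ preserves this; for $i=m+2$ the condition is vacuous. (iii) The congruence at index $i$: subtracting $2^{m+2-i}$ turns $\alpha_i\equiv 2^{m+2-i}\bmod 2^{m+3-i}$ into $\alpha_i-2^{m+2-i}\equiv 0\bmod 2^{m+3-i}$, which is precisely the requirement that the $i$th superscript coordinate become $0$. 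Every coordinate $\ve_j$ with $j\neq i$, together with $\alpha_1=0$, is left unchanged.

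Next I would define the candidate inverse $\tau_i^{-1}$ by adding $2^{m+2-i}$ back onto $\alpha_i$, and run the analogous checks in reverse: adding a multiple of $2^{m+2-i}$ preserves \eqref{eq:divisibility} (vacuous at $i=m+2$), raises the partitioned integer by $2^{m+1}$, and turns $\alpha_i\equiv 0\bmod 2^{m+3-i}$ into $\alpha_i+2^{m+2-i}\equiv 2^{m+2-i}\bmod 2^{m+3-i}$, flipping the $i$th coordinate back to $1$; no positivity issue can arise on this side. Finally, since adding and subtracting $2^{m+2-i}$ in the single slot $\alpha_i$ are mutually inverse arithmetic operations and all other data are fixed, the formulas give $\tau_i\circ\tau_i^{-1}=\mathrm{id}$ and $\tau_i^{-1}\circ\tau_i=\mathrm{id}$ directly.

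I expect no serious obstacle, since the content is entirely the bookkeeping of congruences, exactly as in Lemma~\ref{shift}. The one point genuinely deserving attention is step (i)—confirming that the assumption $\ve_i=1$ makes $\alpha_i$ large enough to subtract from—together with the clean separation of the congruence at index $i$, which changes, from those at all other indices, which do not.
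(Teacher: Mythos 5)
Your proposal is correct and follows essentially the same route as the paper's proof: exhibit the explicit inverse $\tau_i^{-1}$ (adding $2^{m+2-i}$ back to $\alpha_i$) and verify both maps are well-defined by tracking the congruence at index $i$, which is exactly what the paper does. Your step (i), checking that $\ve_i=1$ forces $\alpha_i\geq 2^{m+2-i}$ so the subtraction stays nonnegative, is a point the paper leaves implicit, and it is a worthwhile addition rather than a deviation.
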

\begin{proof}
If $\ve_i=1$ and
$\pi=(n;\zeroalphas)\in 
P_{m,0}^{\ve_2,\ldots,
\ve_i,
\ldots,\ve_{m+2}}(n)$,
then $\ve_i=2^{m+2-i}
\mod 2^{m+3-i}$, 
which means 
$\alpha_i-2^{m+2-i}\equiv
0\mod 2^{m+3-i}$.
Hence $\tau_i(\pi)\in
P_{m,0}^{\ve_2,\ldots,0,
\ldots,\ve_{m+2}}
(n-2^{m+1})$.  If 
$\pi'=(n-2^{m+1};\zeroalphas)
\in P_{m,0}^{\ve_2,\ldots,0,
\ldots,\ve_{m+2}}
(n-2^{m+1})$ and we define
$\tau_i^{-1}$ by
\[
\tau_i^{-1}(\pi')=
(n;0,\alpha_2,\ldots,
\alpha_i+2^{m+2-i},\ldots)
=(n;0,\beta_2,\beta_3,\ldots)
\]
we see that $\beta_i=2^{m+2-i}\mod
2^{m+3-i}$.  Hence $\tau_i^{-1}(\pi')
\in P_{m,0}^{\ve_2,\ldots,
\ve_i,
\ldots,\ve_{m+2}}(n)$.  It follows
that $\tau_i\circ\tau_i^{-1}=
1_{P_{m,0}^{\ve_2,\ldots,
\ve_i,
\ldots,\ve_{m+2}}}$
and $\tau_i^{-1}\circ\tau_i=
1_{P_{m,0}^{\ve_2,\ldots,0,
\ldots,\ve_{m+2}}}$.
\end{proof}
\begin{lemma}\label{tauone}
The map $\tau_1:P_{m,1}(n)\to
P_m(n-2^{m+1})$ defined by
$\tau_1:(n;\alphas)\mapsto
(n-2^{m+1};\alpha_1-2^{m+1},
\alpha_2,\alpha_3,\ldots)$
is a bijection.
\end{lemma}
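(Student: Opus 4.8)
The plan is to follow the template established by Lemmas~\ref{shift} and~\ref{taus}: I would write down an explicit candidate inverse and then check that both composites are identity maps. The only divisibility condition on the first coefficient is the $i=1$ case of \eqref{eq:divisibility}, namely $2^{m+1}\mid\alpha_1$, so membership in $P_{m,1}(n)$ forces $\alpha_1$ to be a positive multiple of $2^{m+1}$, whence $\alpha_1\geq 2^{m+1}$ and $\alpha_1-2^{m+1}\geq0$.

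First I would verify that $\tau_1$ is well-defined. Given $\pi=(n;\alphas)\in P_{m,1}(n)$, subtracting $2^{m+1}$ from $\alpha_1$ leaves a nonnegative multiple of $2^{m+1}$ and alters none of the coefficients $\alpha_i$ for $i\geq2$, so all the divisibility conditions \eqref{eq:divisibility} continue to hold. Since $\alpha_1$ is the coefficient on $2^0$, decreasing it by $2^{m+1}$ decreases the total by exactly $2^{m+1}$, so the image partitions $n-2^{m+1}$ and lies in $P_m(n-2^{m+1})$. Note that the image first coefficient $\alpha_1-2^{m+1}$ may be zero, so $\tau_1$ genuinely lands in all of $P_m(n-2^{m+1})$ rather than only its $P_{m,1}$ part.

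For the inverse I would define $\tau_1^{-1}:P_m(n-2^{m+1})\to P_{m,1}(n)$ by $(n-2^{m+1};\alphas)\mapsto(n;\alpha_1+2^{m+1},\alpha_2,\alpha_3,\ldots)$. Here the key observation is that adding $2^{m+1}$ to $\alpha_1$ produces a coefficient $\alpha_1+2^{m+1}\geq2^{m+1}>0$ that is still divisible by $2^{m+1}$, so the image satisfies \eqref{eq:divisibility} and has strictly positive first coefficient; thus $\tau_1^{-1}$ really does take values in $P_{m,1}(n)$. The two composites $\tau_1\circ\tau_1^{-1}$ and $\tau_1^{-1}\circ\tau_1$ then reduce to the identities $(\alpha_1+2^{m+1})-2^{m+1}=\alpha_1$ and $(\alpha_1-2^{m+1})+2^{m+1}=\alpha_1$, with all higher coefficients fixed. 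There is no serious obstacle here; the only point requiring attention is the surjectivity direction, where one must confirm that the inverse lands in $P_{m,1}$ and not merely in $P_m$---which is exactly the observation that $\alpha_1+2^{m+1}$ is positive.
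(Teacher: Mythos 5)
Your proof is correct and follows essentially the same route as the paper's: observe that $\alpha_1>0$ together with $2^{m+1}\mid\alpha_1$ forces $\alpha_1\geq2^{m+1}$, so $\tau_1$ is well-defined, and then exhibit the inverse $(n-2^{m+1};\alphas)\mapsto(n;\alpha_1+2^{m+1},\alpha_2,\alpha_3,\ldots)$. Your write-up is simply a more detailed version of the paper's (quite terse) argument, with the additional explicit check that the inverse lands in $P_{m,1}(n)$.
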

\begin{proof}
Let $\pi=(n;\alphas)\in P_{m,1}(n)$.
Then $2^{m+1}\alpha_1\geq1$, which
implies $\alpha_1\geq 2^{m+1}$.  
Hence $\tau_1(\pi)\in P_m(n-2^{m+1}$.
For $\pi'=(n-2^{m+1},\alphas)\in 
P_m(n-2^{m+1})$, we may define
$\tau_1^{-1}(\pi')=(n;
\alpha_1+2^{m+1},\alpha_2,\alpha_3,
\ldots)$.
\end{proof}
\begin{lemma}\label{oddeven} Suppose
$\ve_i=1$ for
indices
$2\leq i_1<
i_2<\cdots<i_k\leq m+1$
where $k\geq0$,
and $\ve_i=0$ for
all other $i$ such that
$2\leq i\leq m+1$.  
\begin{enumerate}
\item\label{firstcase} 
If either $n\equiv
2^{m+1}\mod 2^{m+2}$
and $k$ is even, or
$n\equiv
0\mod 2^{m+2}$
and $k$ is odd,
then 
$P_{m,0}^{\epsilons}(n)=
P_{m,0}^{\ve_2,\ldots,\ve_{m+1},
1}(n)$,
and the map
\[
\sigma\circ
\tau_{i_1}\circ
\tau_{i_2}\circ
\cdots\circ\tau_{i_k}\circ
\tau_{m+2}:P_{m,0}
^{\epsilons}\to P_m\left(
\frac{n-(k+1)2^{m+1}}2
\right)
\]
is a bijection.
\item\label{secondcase} 
If either $n\equiv
2^{m+1}\mod 2^{m+2}$
and $k$ is odd, or
$n\equiv
0\mod 2^{m+2}$
and $k$ is even,
then 
$P_{m,0}^{\epsilons}(n)=
P_{m,0}^{\ve_2,\ldots,\ve_{m+1},
0}(n)$,
and
the map
\[
\sigma\circ
\tau_{i_1}\circ
\tau_{i_2}\circ
\cdots\circ\tau_{i_k}:P_{m,0}
^{\epsilons}\to P_m\left(
\frac{n-k\cdot2^{m+1}}2
\right)
\]
is a bijection.
\end{enumerate}
\end{lemma}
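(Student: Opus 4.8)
The plan is to separate the statement into two parts. The substantive part is to pin down the value of $\ve_{m+2}$: the hypotheses fix only $\ve_2,\ldots,\ve_{m+1}$, so I must show that the congruence class of $n$ together with the parity of $k$ forces $\ve_{m+2}$. This is precisely the content of the asserted equalities $P_{m,0}^{\epsilons}(n)=P_{m,0}^{\ve_2,\ldots,\ve_{m+1},1}(n)$ in case~\ref{firstcase} (and with final entry $0$ in case~\ref{secondcase}): for the excluded value of $\ve_{m+2}$ the set $P_{m,0}^{\epsilons}(n)$ is empty. The second, more mechanical, part is to check that the displayed map is a bijection by stringing together the bijections supplied by Lemmas~\ref{shift} and \ref{taus}.

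For the first part I would compute $n$ modulo $2^{m+2}$ directly from the divisibility data. Let $\pi=(n;\zeroalphas)\in P_{m,0}^{\epsilons}(n)$. For $i\geq m+3$ the part $\alpha_i2^{i-1}$ is divisible by $2^{m+2}$ and contributes nothing, while $\alpha_1=0$ drops out; for $2\leq i\leq m+2$ the congruence $\alpha_i\equiv\ve_i2^{m+2-i}\mod 2^{m+3-i}$ gives $\alpha_i2^{i-1}\equiv\ve_i2^{m+1}\mod 2^{m+2}$. Summing over all $i$ yields
\[
n\equiv\Bigl(\sum_{i=2}^{m+2}\ve_i\Bigr)2^{m+1}\equiv(k+\ve_{m+2})2^{m+1}\mod 2^{m+2},
\]
since exactly $k$ of the entries $\ve_2,\ldots,\ve_{m+1}$ equal $1$. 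Hence $n\equiv2^{m+1}\mod 2^{m+2}$ exactly when $k+\ve_{m+2}$ is odd, and $n\equiv0\mod 2^{m+2}$ exactly when $k+\ve_{m+2}$ is even. Reading this against the four sub-hypotheses forces $\ve_{m+2}=1$ in case~\ref{firstcase} and $\ve_{m+2}=0$ in case~\ref{secondcase}, establishing the claimed set equalities.

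For the second part I would track the $\ve$-vector and the weight through the composite. In case~\ref{firstcase}, applying $\tau_{i_1},\ldots,\tau_{i_k},\tau_{m+2}$ in turn is legitimate because at each stage the entry being cleared equals $1$ (Lemma~\ref{taus} requires $\ve_i=1$, and the indices are distinct so earlier applications leave later entries untouched); each application lowers the weight by $2^{m+1}$ and resets one entry to $0$, so after all $k+1$ steps every entry $\ve_2,\ldots,\ve_{m+2}$ is $0$. Thus the image lies in $P_{m,0}^{\zeros}\bigl(n-(k+1)2^{m+1}\bigr)$, on which $\sigma$ of Lemma~\ref{shift} is defined and halves the weight, landing in $P_m\bigl(\tfrac{n-(k+1)2^{m+1}}{2}\bigr)$; being a composite of bijections, the map is a bijection. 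Case~\ref{secondcase} is identical except that $\ve_{m+2}=0$ already, so only $\tau_{i_1},\ldots,\tau_{i_k}$ are needed and the weight drops to $n-k\cdot2^{m+1}$ before applying $\sigma$. The forcing computation guarantees the terminal weight is divisible by $2^{m+2}$, so halving yields a valid integer weight and the domain of $\sigma$ is genuinely reached.

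The main obstacle is the parity bookkeeping in the first part: one must verify that the only indices contributing to $n\bmod 2^{m+2}$ are $2\leq i\leq m+2$, and that the refinement exponents $2^{m+2-i}$ and $2^{m+3-i}$ align so that each $\ve_i$ contributes exactly one copy of $2^{m+1}$. Once this single congruence is in hand, determining $\ve_{m+2}$ and assembling the composite from Lemmas~\ref{shift} and \ref{taus} is routine.
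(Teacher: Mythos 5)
Your proposal is correct and follows essentially the same route as the paper: you force the value of $\ve_{m+2}$ by computing $n$ modulo $2^{m+2}$ from the congruence conditions defining $P_{m,0}^{\epsilons}(n)$ (the paper does the identical computation, isolating the contributions $\alpha_{i_1}2^{i_1-1},\ldots,\alpha_{i_k}2^{i_k-1}$ and $\alpha_{m+2}2^{m+1}$), and then assemble the stated bijection as a composite of the bijections from Lemmas~\ref{shift} and \ref{taus}. Your added remarks---that the distinctness of the indices keeps each $\tau$ applicable, and that the terminal weight is divisible by $2^{m+2}$---are correct refinements of the same argument rather than a different approach.
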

\begin{proof}
Let $\pi=(n;\zeroalphas)
\in P_{m,0}^{\epsilons}(n)$.
Since 
$\displaystyle n=\sum_{i\geq1}
\alpha_i\cdot2^{i-1}$, we have
\[
n\equiv\sum_{i=1}^{m+2}\alpha_i
\cdot 2^{i-1}\mod 2^{m+2}.
\]
If $i\notin\{i_1,\ldots,i_k\}$, 
then $\alpha_i\cdot2^{i-1}\equiv
0\mod2^{m+2}$, whereas if
$i\in\{i_1,\ldots,i_k\}$, then
$\alpha_i\cdot2^{i-1}\equiv
2^{m+1}\mod2^{m+2}$.  
This implies
$2^{m+1}\equiv 
n\equiv\alpha_{i_1}\cdot2^{i_1-1}
+\cdots+\alpha_{i_k}\cdot2^{i_k-1}
+\alpha_{m+2}\cdot2^{m+1}
\equiv k\cdot2^{m+1}
+\alpha_{m+2}\cdot2^{m+1}
\mod 2^{m+2}$.
Thus, whether we assume that
$n\equiv2^{m+1}\mod 2^{m+2}$ and
$k$ is even, or that 
$n\equiv0\mod2^{m+2}$ and $k$
is odd, 
it follows that $\alpha_{m+2}$ must be
odd, i.e., $\ve_{m+2}$ must equal 1. 
An argument similar to the proof
of Lemma \ref{taus} shows that
the map $\tau_{m+2}:
P_{m,0}^{\ve_2,\ldots,\ve_{m+1},1}
(n)
\to P_{m,0}^{\ve_2,\ldots,\ve_{m+1},
0}(n-2^{m+1})$ defined by
$\tau_{m+2}:\pi\mapsto
(n-2^{m+1};0,\alpha_2,\ldots,
\alpha_{m+2}-1,\ldots)$
is a bijection.  Therefore,
by Lemmas \ref{shift} and
\ref{taus}, the map
$\sigma\circ\tau_{i_1}\circ
\tau_{i_2}\circ\cdots\circ
\tau_{i_k}\circ\tau_{m+2}$
is a bijection between the indicated
source and target.  This proves
\eqref{firstcase}. 

Note similarly that either pair of hypotheses
in \eqref{secondcase}
forces
$P_{m,0}^{\epsilons}(n)=
P_{m,0}^{\ve_1,\ldots,
\ve_{m+1},0}(n)$.  Therefore,
by Lemmas \ref{shift} and
\ref{taus}, the map
$\sigma\circ\tau_{i_1}\circ
\tau_{i_2}\circ\cdots\circ
\tau_{i_k}$ is a bijection between
the indicated source and target.
\end{proof}

\section{Proof of Theorem~\ref{thm:main}} \label{sec:proof_main_theorem}
In this section we will prove
Theorem \ref{thm:main}.  It is evident
that $r_m(n)=0$ if $2^{m+1}\nmid
n$.  Assume $2^{m+1}\mid n$.  
We established
in Section \ref{sec:preliminary_results} that
\[
P_m(n)=P_{m,1}(n)\sqcup
\bigsqcup_{i\in\{0,1\}}
P_{m,0}^{\epsilons}.
\]
Lemma \ref{tauone} implies
$P_{m,1}(n)$ and $P_m(n-2^{m+1})$
are in bijective correspondence.  
This contributes $r_m(n-2^{m+1})$
to the total size of $P_m(n)$.

It
remains to obtain similar
correspondences for the 
$2^{m+1}$ disjoint
sets $P_{m,0}^{\epsilons}(n)$ that 
comprise $P_{m,0}(n)$.  To do this,
we divide into the two cases
corresponding to the formulas
given in Theorem \ref{thm:main}: $n\equiv
2^{m+1}\mod2^{m+2}$ and
$n\equiv0\mod2^{m+2}$.  

Suppose first that 
$n\equiv
2^{m+1}\mod2^{m+2}$.  If exactly 
$2j$ of the $m-1$ superscripts
$\ve_2,\ldots,\ve_{m+1}$ are equal
to 1, where $0\leq j\leq\floor{m/2}$,
Lemma \ref{oddeven}\eqref{firstcase}
implies $P_{m,0}^{\epsilons}(n)$
and $P_m((n-(2j+1)2^{m+1})/2)$
are in bijective correspondence.
Therefore, each such $j$
contributes
\[
\binom{m}{2j}r_m\left(
\dfrac{n-(2j+1)2^{m+1}}2
\right)
\]
to the total size of $P_m(n)$.  
If exactly $2j+1$ of the superscripts
$\ve_2,\ldots,\ve_{m+1}$ are
equal to 1, where
$0\leq j\leq\floor{(m-1)/2}$, Lemma 
\ref{oddeven}\eqref{secondcase} 
implies 
$P_{m,0}^{\epsilons}(n)$ and 
$P_m((n-(2j+1)2^{m+1})/2)$
are in bijective correspondence.
Therefore, each such $j$ 
contributes
\[
\binom{m}{2j+1}r_m\left(
\dfrac{n-(2j+1)2^{m+1}}2
\right)
\]
to the total size of $P_m(n)$.  
Thus
\begin{align*}
r_m(n)&=r_m(n-2^{m+1})
+\sum_{j=0}^{\floor{m/2}}
\binom m{2j}
r_m\left(
\dfrac{n-(2j+1)2^{m+1}}2
\right)\\
&\hspace{1.5in}+\sum_{j=0}^{\floor{(m-1)/2}}
\binom m{2j+1}
r_m\left(
\dfrac{n-(2j+1)2^{m+1}}2
\right)
\\
&=r_m(n-2^{m+1})
+\sum_{j=0}^{\floor{m/2}}
\binom{m+1}{2j+1}
r_m\left(
\dfrac{n-(2j+1)2^{m+1}}2
\right)
\end{align*}
by Pascal's rule.

Next, suppose
$n\equiv0\mod2^{m+2}$.
If exactly $2j$ of the superscripts
$\ve_2,\ldots,\ve_{m+1}$ are equal to
1, where $0\leq j\leq
\floor{m/2}$,
Lemma \ref{oddeven}\eqref{secondcase}
implies $P_{0,m}^{\epsilons}(n)$
and $P_m((n-2j\cdot2^{m+1})/2)$ are in bijective 
correspondence.  Therefore, each such
$j$ contributes
\[
\binom m{2j}r_m\left(
\dfrac{n-2j\cdot2^{m+1}}2
\right)
\]
to the total size of $P_m(n)$.  If
exactly $2j-1$ of the superscripts
$\ve_2,\ldots,\ve_{m+1}$ are equal
to 1,
where $0<j\leq\floor{(m+1)/2}$,
Lemma \ref{oddeven}\eqref{firstcase}
implies $P_{0,m}^{\epsilons}(n)$
and $P_m((n-2j\cdot2^{m+1})/2)$ are in bijective
correspondence.  Therefore, each
such $j$ contributes
\[
\binom m{2j-1}r_m\left(
\dfrac{n-2j\cdot2^{m+1}}2
\right)
\]
to the total size of $P_m(n)$.  Thus
\begin{align*}
r_m(n)&=r_m(n-2^{m+1})
+\sum_{j=0}^{\floor{m/2}}
\binom m{2j}
r_m\left(
\dfrac{n-2j\cdot2^{m+1}}2
\right)
+\sum_{j=1}^{\floor{(m+1)/2}}
\binom m{2j-1}
r_m\left(
\dfrac{n-2j\cdot2^{m+1}}2
\right)\\
&=r_m(n-2^{m+1})
+\sum_{j=0}^{\floor{(m+1)/2}}
\binom{m+1}{2j}r_m\left(
\dfrac{n-2j\cdot2^{m+1}}2
\right)
\end{align*}
by Pascal's rule.

\section{Proof of Theorem~\ref{thm:main_partial}}\label{sec:partial_sum}
\label{partial}
In this section, we show that 
$r_m(n)$ is a
sequence of partial sums of 
$r_{m-1}(n)$ for $m\geq0$.  
It will be convenient to have
the following lemma, which identifies
two special cases of Theorem
\ref{thm:main}.
\begin{lemma}\label{sumlemma}
For $m\geq0$, $r_{m-1}(2^m)=m+1$
and $r_{m-1}(2^{m+1})=
2m+2+\binom m2$.
\end{lemma}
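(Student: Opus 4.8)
The plan is to read off both values directly from Theorem~\ref{thm:main} with $m$ replaced by $m-1$; this is legitimate for all $m\ge0$, the boundary value $m-1=-1$ (arising only when $m=0$) being covered since the recursion of Theorem~\ref{thm:main} specializes to the classical binary-partition recursion recalled in Section~\ref{sec:intro}. The key observation is that evaluating the recursion at the very small inputs $n=2^m$ and $n=2^{m+1}$ forces almost every summand to have a negative or zero second argument, so each sum collapses to at most two surviving terms. Throughout I use the conventions $r_{m-1}(0)=1$ and $r_{m-1}(n)=0$ for $n<0$.

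For the first identity, note that $2^m\equiv 2^m\bmod 2^{m+1}$, so the first formula of Theorem~\ref{thm:main} (with $m\mapsto m-1$) applies. The second argument of the $j$th summand is $\tfrac{2^m-(2j+1)2^m}{2}=-j\,2^m$, which is negative for $j\ge1$ and equals $0$ for $j=0$. Hence only the leading term $r_{m-1}(2^m-2^m)=r_{m-1}(0)=1$ and the $j=0$ summand $\binom{m}{1}r_{m-1}(0)=m$ survive, giving $r_{m-1}(2^m)=1+m=m+1$.

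For the second identity, $2^{m+1}\equiv0\bmod 2^{m+1}$, so the second formula applies. Its leading term is $r_{m-1}(2^{m+1}-2^m)=r_{m-1}(2^m)=m+1$ by the first identity. The second argument of the $j$th summand is $\tfrac{2^{m+1}-2j\cdot2^m}{2}=(1-j)2^m$, which is positive only for $j=0$, zero for $j=1$, and negative thereafter. Thus only the $j=0$ summand $\binom{m}{0}r_{m-1}(2^m)=m+1$ and the $j=1$ summand $\binom{m}{2}r_{m-1}(0)=\binom{m}{2}$ contribute, yielding $r_{m-1}(2^{m+1})=(m+1)+(m+1)+\binom{m}{2}=2m+2+\binom{m}{2}$.

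The only real subtlety — the ``main obstacle'' such as it is — is bookkeeping at the boundary. One must check that the floor bounds $\floor{(m-1)/2}$ and $\floor{m/2}$ on the two sums do not silently discard a term we are claiming survives: the $j=0$ term in the first sum lies outside the range precisely when $m=0$, but then its coefficient $\binom{0}{1}=0$ vanishes anyway and the leading term alone gives $m+1=1$; similarly the $j=1$ term in the second sum lies outside the range for $m\in\{0,1\}$, but there $\binom{m}{2}=0$, so the closed form is unaffected. Once one confirms that every excluded term has coefficient $0$ and applies the negative-argument convention consistently, the remaining computation is purely arithmetic.
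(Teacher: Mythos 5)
Your proof is correct and is exactly the argument the paper intends: the paper states Lemma~\ref{sumlemma} without proof, introducing it only as identifying ``two special cases of Theorem~\ref{thm:main},'' and your specialization of that recursion (with $m$ replaced by $m-1$, the convention $r_{m-1}(n)=0$ for $n<0$, and the boundary checks that every term excluded by the floor bounds has vanishing binomial coefficient) supplies precisely that verification. Nothing further is needed.
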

Let $\displaystyle S_m(n)=
\sum_{k=0}^{
n/2^{m+1}}r_{m-1}(2^{m}k)$.  
Then $S_m(0)=r_m(0)=1$.  
It therefore
suffices to show that $S_m(n)$ 
obeys the recursive formula
for $r_m(n)$ given
in Theorem \ref{thm:main}.  

Suppose $n\equiv 2^{m+1}\mod
2^{m+2}$.  Let $n=2^{m+2}n'+2^{m+1}$.
We shall use induction
on $n'$.  The base case $n'=0$ holds
since, by Lemma \ref{sumlemma},
\[
S_m(2^{m+1})=\sum_{k=0}^1r_{m-1}(2^mk)
=r_{m-1}(0)+r_{m-1}(2^m)=1+m+1=m+2
\]
while
\begin{align*}
S_m(2^{m+1}-2^{m+1})
+\sum_{j=0}^{\floor{m/2}}
\binom{m+1}{2j+1}S_m\left(
\frac{2^{m+1}-(2j+1)2^{m+1}}2
\right)
=(m+2)S_m(0)=m+2.
\end{align*}
Assume that,
for a fixed $n'\geq0$,
\[
S_m(2^{m+2}n'+2^{m+1})
=S_m(2^{m+2}n')+
\sum_{j=0}^{\floor{m/2}}
\binom{m+1}{2j+1}S_m\left(
\frac{
2^{m+2}n'+2^{m+1}
-(2j+1)2^{m+1}}2
\right).
\]
Then
\begin{align*}
S_m(2^{m+2}(n'+1)+2^{m+1})&=
\sum_{k=0}^{2n'+3}r_{m-1}
(2^mk)\\
&=
\underbrace{S_m(
2^{m+2}n'+2^{m+1})}_A
+\underbrace{r_{m-1}(
2^m(2n'+2))}_B
+\underbrace{r_{m-1}(
2^m(2n'+3))}_C.
\end{align*}
We know an expression for $A$
by hypothesis.  By Theorem
\ref{thm:main},
\begin{align*}
B&=r_{m-1}(2^m(2n'+2)-2^m)
+\sum_{j=0}^{\floor{m/2}}
\binom m{2j}r_{m-1}\left(
\dfrac{2^m
(2n'+2)-2j\cdot
2^m}2
\right)\\
&=r_{m-1}(2^m(2n'+1))
+\sum_{j=0}^{\floor{m/2}}
\binom m{2j}r_{m-1}(
2^m(n'-j+1)
)
\end{align*}
and
\begin{align*}
C&=r_{m-1}(2^m(2n'+3)-2^m)
+\sum_{j=0}^{\floor{(m-1)/2}}
\binom m{2j+1}r_{m-1}\left(
\frac{2^m(2n'+3)-(2j+1)2^m}2
\right)\\
&=r_{m-1}(2^m(2n'+2))
+\sum_{j=0}^{\floor{(m-1)/2}}
\binom m{2j+1}r_{m-1}
(
2^m(n'-j+1)
),
\end{align*}
which implies
\begin{align*}
B+C&=\sum_{k=2n'+1}^{2n'+2}
r_{m-1}(2^mk)
+\sum_{j=0}^{\floor{m/2}}
\left(
\binom m{2j}+\binom m{2j+1}
\right)r_m(2^m(n'-j+1))\\
&=\sum_{k=2n'+1}^{2n'+2}
r_{m-1}(2^mk)
+\sum_{j=0}^{\floor{m/2}}
\binom{m+1}{2j+1}
r_m(2^m(n'-j+1)).
\end{align*}
It follows from the induction
hypothesis that
\begin{align*}
S_m(2^{m+2}(n'+1)+2^{m+1})
&=A+B+C\\
&=S_m(2^{m+2}n'+2^{m+2})
+\sum_{j=0}
^{\floor{m/2}}
\binom{m+1}{2j+1}
S_m(2^{m+1}(n'-j+1))\\
&=S_m(2^{m+2}(n'+1))\\
&\hspace{0.5in}+\sum_{j=0}
^{\floor{m/2}}
\binom{m+1}{2j+1}
S_m\left(
\dfrac{2^{m+2}(n'+1)+2^{m+1}
-(2j+1)2^{m+1}}
2
\right).
\end{align*}

Next, suppose $n\equiv0\mod
2^{m+2}$.  Let $n=2^{m+2}n'$.
We shall use induction
on $n'$.  The base case
$n'=1$ holds since,
by Lemma \ref{sumlemma},
\[
S_m(2^{m+2})=\sum_{k=0}^2
r_{m-1}(2^mk)=r_{m-1}(0)
+r_{m-1}(2^m)+r_{m-1}(2^{m+1})
=3m+4+\binom m2
\]
while 
\begin{align*}
S_m(2^{m+2}&-2^{m+1})+
\sum_{j=0}^{\floor{(m+1)/2}}
\binom{m+1}{2j}S_m\left(
\dfrac{2^{m+2}-2j\cdot2^{m+1}}2
\right)\\
&=S_m(2^{m+1})+
\sum_{j=0}^{\floor{(m+1)/2}}
\binom{m+1}{2j}S_m(2^{m+1}(1-j))
\\
&=2S_m(2^{m+1})
+\binom{m+1}2S_m(0)\\
&=2(r_{m-1}(0)+r_{m-1}(2^m))
+\binom{m+1}2\\
&=2m+4+\binom{m+1}2=3m+4
+\binom m2.  
\end{align*}
Assume that, 
for a fixed $n'\geq1$,
\[
S_m(2^{m+2}n')=S_m(2^{m+2}n'
-2^{m+1})+\sum_{j=0}
^{\floor{(m+1)/2}}\binom{m+1}{2j}
S_m\left(
\dfrac{2^{m+2}n'-2j\cdot2^{m+1}}2
\right).
\]
Then
\begin{align*}
S_m(2^{m+2}(n'+1))&=
\sum_{k=0}^{2n'+2}r_{m-1}(2^mk)\\
&=\underbrace{S_m(2^{m+2}n')}_A
+\underbrace{r_{m-1}
(2^m(2n'+1))}_B
+\underbrace{r_{m-1}
(2^m(2n'+2))}_C.
\end{align*}
We know an expression
for $A$ by hypothesis.
By Theorem \ref{thm:main},
\begin{align*}
B&=r_{m-1}(2^m(2n'+1)-2^m)
+\sum_{j=0}
^{\floor{(m-1)/2}}\binom m{2j+1}
r_{m-1}\left(
\dfrac{2^m(2n'+1)-(2j+1)2^m}2
\right)\\
&=r_{m-1}(2^m(2n'))
+\sum_{j=0}
^{\floor{(m-1)/2}}\binom m{2j+1}
r_{m-1}(2^m(n'-j))
\end{align*}
and
\begin{align*}
C&=r_{m-1}(2^m(2n'+2)-2^m)
+\sum_{j=0}
^{\floor{m/2}}\binom m{2j}
r_{m-1}\left(
\dfrac{2^m(2n'+2)-2j\cdot2^m}2
\right)\\
&=r_{m-1}(2^m(2n'+1))
+\sum_{j=0}
^{\floor{m/2}}\binom m{2j}
r_{m-1}(2^m(n'-j+1)).
\end{align*}
After a change of indexing
variable in the sum 
appearing in $B$, we obtain
\begin{align*}
B+C&=\sum_{k=2n'}^{2n'+1}
r_{m-1}(2^mk)+\sum_{j=0}
^{\floor{(m+1)/2}}
\left(
\binom{m}{2j-1}+
\binom{m}{2j}
\right)
r_{m-1}(2^m(n'-j+1))\\
&=\sum_{k=2n'}^{2n'+1}
r_{m-1}(2^mk)+
\sum_{j=0}
^{\floor{(m+1)/2}}
\binom{m+1}{2j}
r_{m-1}(2^m(n'-j+1)).
\end{align*}
It follows from the
induction hypothesis that
\begin{align*}
S_m(2^{m+2}(n'+1))&=A+B+C\\
&=S_m(2^{m+2}n'+2^{m+1})
+\sum_{j=0}
^{\floor{(m+1)/2}}
\binom{m+1}{2j}
S_m(2^{m+1}
(n-j+1))\\
&=S_m(2^{m+2}(n'+1)-2^{m+1})
+\sum_{j=0}
^{\floor{(m+1)/2}}
\binom{m+1}{2j}
S_m\left(
\dfrac{2^{m+2}(n'+1)-2j
\cdot2^{m+1}}2
\right).
\end{align*}
This concludes the proof of Theorem \ref{thm:main_partial}.

\bibliographystyle{alpha}
\bibliography{blarson}

\vskip.3truein

\noindent{\em Department of Mathematics, Clayton State University}\\
\noindent{\em 2000 Clayton State Boulevard,
Morrow, GA 30260}\vskip.1truein

\noindent{\em Department of Mathematics, The Catholic University of America}\\
\noindent{\em 620 Michigan Ave NE, Washington DC 20064}\vskip.1truein

\noindent\texttt{scottbailey@clayton.edu,}\quad\texttt{larsond@cua.edu}

\end{document}